\documentclass[oneside, a4paper,11pt,reqno]{amsart}
\textheight=23cm
\textwidth=16cm
\voffset=-1.5cm
\hoffset=-1.5cm
\parskip=2mm 
\usepackage{xcolor}
\usepackage{ulem}
\usepackage[utf8]{inputenc}

\newtheorem{hypo}{Hypothesis}

\newtheorem{thm}[hypo]{Theorem}

\newtheorem{lem}[hypo]{Lemma}

\def\PP{\mathbb{P}}
\def\RR{\mathbb{R}}
\def\ZZ{\mathbb{Z}}

\def\EE{\mathbb{E}}
\def\NN{\mathbb{N}}

\let\BFseries\bfseries\def\bfseries{\BFseries\mathversion{bold}} % formulas in headings bold

\def \ind {{\bf 1}}

\def\dd{\mbox{d}}

%%                                                               %%
%% If possible, avoid using \def and use instead \newcommand     %%
%% If possible, avoid defining your own environments, and use    %%
%% instead the environments already defined by ejpecp:           %%
%%  assumption, assumptions, claim, condition, conjecture,       %%
%%  corollary, definition, definitions, example, exercise, fact, %%
%%  facts, heuristics, hypothesis, hypotheses, lemma, notation,  %%
%%  notations, problem, proposition, remark, theorem             %%
%%                                                               %%
%%%%%%%%%%%%%%%%%%%%%%%%%%%%%%%%%%%%%%%%%%%%%%%%%%%%%%%%%%%%%%%%%%%

\title{Persistence exponent for random walk on directed versions of $Z^2$}

\begin{document}
\author{Nadine Guillotin-Plantard} 
\address{Institut Camille Jordan, CNRS UMR 5208, Universit\'e de Lyon, Universit\'e Lyon 1, 43, Boulevard du 11 novembre 1918, 69622 Villeurbanne, France.}
\email{nadine.guillotin@univ-lyon1.fr}

\author{Fran\c{c}oise P\`ene}
\address{Universit\'e de Brest and IUF,
LMBA, UMR CNRS 6205, 29238 Brest cedex, France}
\email{francoise.pene@univ-brest.fr}

\subjclass[2000]{60F05; 60G52}
\keywords{Random walk in random scenery; local limit theorem; local time; persistence; random  walk in random environment\\
This research was supported by the french ANR project MEMEMO2}

\begin{abstract}
We study the persistence exponent for random walks in random sceneries (RWRS) with integer values and for
some special random walks in random environment in $\mathbb Z^2$
including random walks in $\mathbb Z^2$ with random orientations of the horizontal layers.
 \end{abstract}

\maketitle

\section{Introduction and main results}
Random walks in random sceneries were introduced independently by H. Kesten and F. Spitzer \cite{KS} and by A. N. Borodin \cite{Borodin}.
Let $S = (S_n)_{n \ge 0}$ be a random walk in $\mathbb{Z}$ starting at $0$,
i.e., $S_0 = 0$ and
$
X_n:= S_n-S_{n-1}, n \ge 1$ is a sequence of i.i.d. (independent identically distributed) $\mathbb{Z}$-valued random variables.
Let $\xi = (\xi_x)_{x \in \mathbb{Z}}$ be a field of i.i.d.\ $\mathbb{Z}$-valued random variables independent of $S$.
The field $\xi$ is called the random scenery.
{\it The random walk in random scenery (RWRS)} $Z := (Z_n)_{n \ge 0}$ is defined 
by setting $Z_0 := 0$ and, for $n \in \mathbb{N}^{*}$,
\begin{equation}
Z_n := \sum_{i=1}^n \xi_{S_i}.
\end{equation}
We will denote by $\mathbb{P}$ the joint law of $S$ and $\xi$. Limit theorems for RWRS have a long history, we refer to \cite{GuPo} for a complete review.

In the following, we consider the case when the common distribution of the scenery $\xi_x$  is assumed to be symmetric with a third moment and with positive variance $\sigma^2_\xi$. Concerning the random walk $(S_n)_{n\geq1}$, the distribution of $X_1$ 
is assumed to be centered and square integrable with positive variance $\sigma_X^2$. We assume without any loss of generality that neither the support of the distribution of $X_1$ nor the one of $\xi_0$ are contained in a proper subgroup of $\mathbb Z$.\footnote{If the subgroup of $\mathbb Z$ generated by the support of the distribution of $X_1$ is $m\mathbb Z$ for some $m>1$, then
we replace $(S_n)_n$ by $(S_n/m)_n$ and $(\xi_x)_x$ by $(\xi_{mx})_x$ and we observe that these changes do not affect the RWRS $Z$.
\\
If the greatest common divisor of the support of the distribution of $\xi_0$ is $\tilde m>1$, we divide the RWRS $Z$ by $\tilde m$.}

%We will assume w.l.o.g. that the random walk is strongly aperiodic in the sense given by Spitzer (see Chapter 1 in \cite{S76}).
Under the previous assumptions, the following weak convergence holds in the space  of 
c\`adl\`ag real-valued functions 
defined on $[0,\infty)$,  endowed with the 
Skorokhod topology (with respect to the classical $J_1$-metric):
$$\left(n^{-\frac{1}{2}} S_{\lfloor nt\rfloor}\right)_{t\geq 0}   
\mathop{\Longrightarrow}_{n\rightarrow\infty}
^{\mathcal{L}} \left(\sigma_X\, Y(t)\right)_{t\geq 0},$$
where $Y$ is a standard real Brownian motion.
We will denote by $(L_t(x))_{x\in\mathbb{R},t\geq 0}$ a continuous version with compact support of the local time of the process $(\sigma_X\, Y(t))_{t\geq 0}$ (see \cite{marcusrosen}).
In \cite{KS}, Kesten and Spitzer proved 
the convergence in distribution of $((n^{- 3/4} Z_{[nt]})_{t\ge 0})_n$, 
to a process $\Delta=(\Delta_t)_{t\geq 0}$ defined by
$$\Delta_t : =\sigma_\xi \int_{\mathbb{R}} L_t(x) \, \dd W(x),$$
where $(W(x))_{x\ge 0}$ and $(W(-x))_{x\ge 0}$ are independent standard Brownian motions independent of $Y$. The process $\Delta$ is called Kesten-Spitzer process in the literature.
We are interested in the persistence properties
of the sum $Z_n, n\geq 1$. Our main result in this setup is the following one.
\begin{thm}
\label{theoMk}
There exists a constant $c>0$ such that for large enough $T$
\begin{equation}
\label{eqMk}
\PP\Big[ \max_{k=1,\ldots,T} Z_k \leq 1\Big]
\leq  T^{-1/4} (\log T)^{c}.
\end{equation}
If moreover $\mathbb E[e^{\xi_1}]<\infty$, then
there exist positive constants $c'$, $c''$ and $T_0$ such that 
\begin{equation}
\label{eqMklower}
 T^{-1/4} (\log T)^{-c'}\left[H^{-1}\left(c'' T^{-\frac 14}\right)\right]^{-1}\leq \PP\Big[ \max_{k=1,\ldots,T} Z_k \leq 1\Big]
\end{equation}
for every $T>T_0$, where $H$ is given by $H(t):=\mathbb E[e^{\xi_1}{\mathbf 1}_{\{e^{\xi_1}>t\}}]$
and where $ H^{-1}(x):=\inf\{t>0\ :\ H(t)<x\}$, for every
$x>0$.

In particular, if there exist $\eta>1$,  $A_1>0$ and $A_2>0$ such that 
$$\forall x>0,\quad\mathbb P(\xi_0>x)\le A_1e^{-A_2 x^\eta},$$
then there exist $c'>0$ and $T_0>0$ such that 
\begin{equation}
 T^{-1/4} e^{- c'(\log T)^{\frac 1\eta} }\leq \PP\Big[ \max_{k=1,\ldots,T} Z_k \leq 1\Big]
\end{equation}
for every $T>T_0$.

If the distribution of $\xi_1$ has compact support, then
there exist positive constants $c'$ and $T_0$ such that 
\begin{equation}
\label{eqMklower}
 T^{-1/4} (\log T)^{-c'}\leq \PP\Big[ \max_{k=1,\ldots,T} Z_k \leq 1\Big]
\end{equation}
for every $T>T_0$.

\end{thm}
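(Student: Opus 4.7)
The broad strategy is to reduce the persistence question for the discrete walk $(Z_k)_{k\leq T}$ to that of the continuum Kesten--Spitzer process $\Delta$, which is self-similar of index $3/4$ and whose persistence exponent equals $1/4$ (established separately, e.g.\ via the Aurzada--Dereich framework or as a companion result of the present paper). In particular, self-similarity yields
$$\PP\bigl(\max_{t\in[0,1]}\Delta_t\leq\eps\bigr)=\PP\bigl(\max_{s\in[0,\eps^{-4/3}]}\Delta_s\leq 1\bigr)\asymp\eps^{1/3},$$
which at $\eps\asymp T^{-3/4}$ gives the target rate $T^{-1/4}$.

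\emph{Upper bound \eqref{eqMk}.} I would discretize time: sample $t_j=j/M$ for $j=1,\ldots,M$ with $M=\lfloor(\log T)^a\rfloor$. The event $\{\max_{k\leq T}Z_k\leq 1\}$ forces $Z_{\lfloor Tt_j\rfloor}\leq 1$ for every $j$. The Kesten--Spitzer invariance principle together with a quantitative bound on the approximation error $T^{-3/4}Z_{\lfloor Ts\rfloor}-\Delta_s$ then allows comparison with $\PP(\max_{j\leq M}\Delta_{t_j}\leq c'T^{-3/4})$; a modulus-of-continuity estimate on $\Delta$ (built from its H\"older exponent up to $3/4$) upgrades this sampled maximum to $\PP(\max_{t\in[0,1]}\Delta_t\leq c''T^{-3/4})$, up to a $(\log T)^c$ slack. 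The small-ball estimate above then yields \eqref{eqMk}.

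\emph{Lower bound \eqref{eqMklower}.} I would combine a good-walk event with a scenery-truncation argument. Condition on $G_T\in\sigma(S)$, of probability bounded below by a constant, on which $\mathrm{Range}(S_{[0,T]})$ has size $\asymp\sqrt T$ and the local time profile $(N_T(x))_x$ is regular. Truncate $\xi$ at level $M:=\log H^{-1}(c''T^{-1/4})$: on $E_M:=\{\xi_x\leq M \text{ for every } x\in\mathrm{Range}(S)\}$ the RWRS coincides with its truncated version, which has bounded scenery and therefore obeys a matching $T^{-1/4}(\log T)^{-c'}$ persistence lower bound (obtained from the Kesten--Spitzer small-ball lower bound, parallel to the upper-bound argument above). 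The Markov-type estimate $\PP(\xi>M)\leq H(e^M)/e^M\leq c''T^{-1/4}/H^{-1}(c''T^{-1/4})$ then gives $\PP(E_M\mid S\in G_T)\gtrsim[H^{-1}(c''T^{-1/4})]^{-1}$ up to polylogarithmic losses (possibly after a joint optimization with the walk's confinement scale). Multiplying the two contributions yields \eqref{eqMklower}, and the two ``in particular'' corollaries follow by plugging the corresponding asymptotics of $H$: uniformly bounded in the compact-support case, and $H^{-1}(cT^{-1/4})\asymp\exp(c(\log T)^{1/\eta})$ when $\PP(\xi>x)\leq A_1\exp(-A_2 x^\eta)$ with $\eta>1$, by an elementary Laplace computation.

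\emph{Main obstacle.} The lower bound is the hard part. The tension lies between making $M$ large (so that $\xi_x\leq M$ on every visited $x$ is likely) and keeping the truncated-persistence estimate favorable. The choice $M\asymp\log H^{-1}(cT^{-1/4})$ balances these competing costs and is exactly what produces the factor $[H^{-1}(cT^{-1/4})]^{-1}$; rigorously implementing it requires carefully matching the Kesten--Spitzer small-ball lower bound with the scenery-truncation argument without losing further polynomial factors, and exploiting the regularity of the local time on the good-walk event.
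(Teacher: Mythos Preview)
Your proposal takes a route that is fundamentally different from the paper's, and in its current form both halves have genuine gaps.

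\textbf{Upper bound.} Your plan is to pass from $\max_{k\le T}Z_k\le 1$ to $\max_{j}\Delta_{t_j}\le c\,T^{-3/4}$ via the Kesten--Spitzer invariance principle and then invoke the small-ball estimate for $\Delta$. The difficulty is that the threshold $T^{-3/4}$ shrinks with $T$, so weak convergence of $(T^{-3/4}Z_{\lfloor Tt\rfloor})_t$ to $(\Delta_t)_t$ is not enough: you need a coupling or strong approximation with error $o(T^{-3/4})$, uniformly over the sample points. No such KMT-type result is available for RWRS at this rate (and proving one would be a substantial result in its own right). The ``quantitative bound on the approximation error'' you invoke is therefore the missing ingredient, not a technicality.

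\textbf{Lower bound.} The truncation argument does not produce the factor $[H^{-1}(c''T^{-1/4})]^{-1}$ in the way you describe. On your good-walk event the range has order $\sqrt{T}$ sites, so
\[
\PP(E_M\mid S)\;\approx\;(1-\PP(\xi>M))^{\sqrt{T}}\;\approx\;\exp\bigl(-\sqrt{T}\,\PP(\xi>M)\bigr).
\]
For this to stay bounded below you need $\PP(\xi>M)=O(T^{-1/2})$, not $O(T^{-1/4}/H^{-1})$; your Markov estimate $\PP(\xi>M)\le H(e^M)/e^M$ with $M=\log H^{-1}(c''T^{-1/4})$ gives at best $\PP(\xi>M)\lesssim T^{-1/4}/H^{-1}(c''T^{-1/4})$, which in the compact-support case (where $H^{-1}$ is bounded) would make $\PP(E_M\mid S)\approx e^{-cT^{1/4}}$ --- exponentially small, whereas the answer should be $T^{-1/4}(\log T)^{-c'}$. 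So the balance you describe does not close. (In the compact-support case one can of course take $M=\operatorname{ess\,sup}\xi$ and $E_M$ has full probability, but then you still need an honest lower bound for the persistence of the bounded-scenery RWRS, which inherits the same coupling problem as your upper bound.)

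\textbf{How the paper proceeds.} The paper avoids any strong coupling entirely by using the Molchan exponential-functional method: a general lemma (for time-reversible processes satisfying $\mathbb{E}[\sup_{t\le 1}Z_{\lfloor tT\rfloor}]\sim\kappa T^{3/4}$) gives $\mathbb{E}\bigl[(\sum_{k}e^{Z_k})^{-1}\bigr]\asymp T^{-1/4}$. The persistence probability is then compared with this functional, working \emph{conditionally on the walk $S$}. The key tools are positive association of the scenery variables (playing the role of Slepian's lemma), a conditional local limit theorem $\PP(Z_n=0\mid S)\gtrsim V_n^{-1/2}$ on a high-probability set of walks, and a conditional Berry--Esseen bound. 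The factor $[H^{-1}(c''T^{-1/4})]^{-1}$ arises not from a range-wide truncation but from a single term: in the lower bound one must control $\mathbb{E}[e^{-Z_1}\mathbf 1_{\{Z_1<-\log H^{-1}(\cdots)\}}]$, and since $Z_1\overset{d}{=}\xi_1$ this is exactly $H(H^{-1}(\cdots))$.
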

The corresponding results for the continuous-time Kesten-Spitzer process $\Delta$ were obtained in \cite{BFFN}, also cf.\ \cite{Maj,MdM,castellguillotinwatbled}. The case of random walk
in random gaussian scenery was treated in \cite{AGP} with a lower bound in $ T^{-1/4} e^{- c'\sqrt{\log T}}$, coherent with our result.
We would particularly like to stress that in all these results, the scenery was supposed to be gaussian.

Now we will state an analogous result for particular models of
random walks $(M_n)_n$ in random environment on $\mathbb Z^2$ including 
random walks on $\mathbb Z^2$ with random orientation of the horizontal layers. 

To the $y$-th horizontal line, we associate the $\mathbb Z$-valued
random variable $\xi_y$, corresponding to the only authorized horizontal displacement of the walk $(M_n)_n$ on this horizontal line.
We assume that $(\xi_y)_{y\in \mathbb Z}$ is a sequence of i.i.d. random variables the distribution of which is symmetric, has a moment of order 3
and a positive variance $\sigma_\xi^{2}$.
We consider a distribution $\nu$ on $\mathbb Z$ admiting a variance and with null expectation (corresponding to the distribution of the vertical displacements when vertical displacement occur).
We fix a parameter $\delta\in(0,1)$.
We consider a random walk in random environment $M=(M_n)_n$ on $\mathbb Z^2$ starting from the origin (i.e. $M_0:=(0,0)$), moving
horizontally (with respect to $(\xi_y)_y$) with probability $\delta$ and moving vertically (with respect to $\nu$) with probability  $1-\delta$ as follows:
$$\mathbb P(M_{n+1}=(x+\xi_y,y)|M_n=(x,y))=\delta\quad\mbox{(horizontal displacement)} $$
$$\mathbb P(M_{n+1}=(x,y+z) |M_n=(x,y))=(1-\delta)\nu(\{z\})\quad\mbox{(vertical displacement)}.$$
Observe that if the $\xi_y$'s have Rademacher distribution
(i.e. takes their values in $\{-1,1\}$), then  $M$ is a walk
on $\mathbb Z^2$ with random orientations of the horizontal
layers, the $y$-th horizontal layer being oriented to the left if
$\xi_y=-1$ and to the right if $\xi_y=1$). Such models
have been considered by Matheron and de Marsilly in \cite{MdM}, their transience has been established by Campanino and P\'etritis in \cite{CP},
see also \cite{GPN} for their asymptotic behaviour
and \cite{TLL} for local limit theorem in this context. 

The process $M$ is strongly related to RWRS. Indeed it can be represented as follows
$$M_n=(\tilde Z_n,S_n)=\left(\sum_{k=1}^n\xi_{S_k}\varepsilon_k,S_n\right),\quad S_n:=\sum_{k=1}^n\tilde X_k(1-\varepsilon_k) ,$$
where $(\tilde X_k)_k$ is a sequence of i.i.d. random variables with distribution $\nu$
and $(\varepsilon_k)_k$ is a sequence of i.i.d. Bernoulli random variables with parameter $\delta$ (i.e. $\mathbb P(\varepsilon_k=1)=\delta=1-\mathbb P(\varepsilon_k=0)$).
We assume that $(\xi_y)_y$, $(\tilde X_k)_k$ and $(\varepsilon_k)_k$  
are independent. We then set $X_k:=\tilde X_k(1-\varepsilon_k)$.
As for RWRS, we assume without any loss of generality that neither the support of $\nu$ nor the one of the distribution of $\xi_0$ are contained in a proper subgroup of $\mathbb Z$. Observe that the second coordinate $S$ of $M$
is a random walk. Hence we focus our study on the first coordinate $\tilde Z$
of $M$, which is very similar to RWRS. Our second main result states that the conclusion of Theorem \ref{theoMk}
is still valid for $\tilde Z$.
\begin{thm}[Persistence of $M$ on the leftside]
\label{theoMk0}
There exists a constant $c>0$ such that for large enough $T$
\begin{equation}
\label{eqMk0}
\PP\Big[ \max_{k=1,\ldots,T} \tilde Z_k \leq 1\Big]
\leq  T^{-1/4} (\log T)^{+c}.
\end{equation}
If moreover $\mathbb E[e^{\xi_1}]<\infty$, then
there exist positive constants $c'$, $c''$ and $T_0$ such that 
\begin{equation}
\label{eqMklower0}
 T^{-1/4} (\log T)^{-c'}\left[ H^{-1}\left(c'' T^{-\frac 14}\right)\right]^{-1}\leq \PP\Big[ \max_{k=1,\ldots,T} \tilde Z_k \leq 1\Big],
\end{equation}
for every $T>T_0$. The function $H$ is defined as in Theorem \ref{theoMk}.
\end{thm}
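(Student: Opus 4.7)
The key observation is that $\tilde Z$ is piecewise constant, changing only at the successive horizontal-move times $T_k:=\inf\{n>T_{k-1}\,:\,\varepsilon_n=1\}$ (with $T_0:=0$). Writing $N_T:=\sum_{k=1}^T\varepsilon_k$ for the number of horizontal steps up to time $T$, one has the identity of events $\{\max_{1\leq k\leq T}\tilde Z_k\leq 1\}=\{\max_{1\leq j\leq N_T}\tilde Z_{T_j}\leq 1\}$. Setting $U_j:=S_{T_j}$, the vertical position sampled at horizontal-move epochs is a centered random walk on $\mathbb Z$ whose step is distributed as $\sum_{i=1}^{G}\tilde X_i$, with $G$ geometric of parameter $\delta$ independent of the $\tilde X_i$'s. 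By Wald's identity its variance equals $\frac{1-\delta}{\delta}\,\var(\nu)<\infty$, and its support contains $\mathrm{supp}(\nu)$ (take $G=1$), hence generates $\mathbb Z$. Since $(U_j)_j$ is independent of $\xi$, the process $\tilde Z_{T_k}=\sum_{j=1}^k\xi_{U_j}$ is a bona fide RWRS fulfilling every hypothesis of Theorem \ref{theoMk}.

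The remainder is a time-change argument based on the Bernoulli concentration of $N_T$ around $\delta T$. For the upper bound \eqref{eqMk0} I would split
\[\PP\bigl(\max_{k\leq T}\tilde Z_k\leq 1\bigr)\leq \PP(N_T<\delta T/2)+\PP\bigl(\max_{j\leq\lfloor\delta T/2\rfloor}\tilde Z_{T_j}\leq 1\bigr),\]
the first term being exponentially small by Cram\'er's theorem for Bernoulli sums, the second bounded by $O(T^{-1/4}(\log T)^c)$ via the upper bound of Theorem \ref{theoMk} applied to the RWRS $(\tilde Z_{T_j})_j$. For the lower bound \eqref{eqMklower0} I would use the reverse inclusion
\[\PP\bigl(\max_{k\leq T}\tilde Z_k\leq 1\bigr)\geq \PP\bigl(\max_{j\leq\lceil 2\delta T\rceil}\tilde Z_{T_j}\leq 1\bigr)-\PP(N_T>2\delta T),\]
valid because on $\{N_T\leq\lceil 2\delta T\rceil\}$ the larger maximum dominates, and invoke the corresponding lower bound of Theorem \ref{theoMk} for $(\tilde Z_{T_j})_j$; the penalty $[H^{-1}(c''T^{-1/4})]^{-1}$ then transfers directly, up to absorbing a $(2\delta)^{-1/4}$ factor into $c''$.

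The only substantive work beyond appealing to Theorem \ref{theoMk} is the verification that the induced walk $(U_j)_j$ fulfills all the conditions needed there --- centering, square-integrability and aperiodicity on $\mathbb Z$ --- which are immediate consequences of the hypotheses on $\nu$. The randomness of the sampling times is not a genuine obstacle because the Bernoulli concentration yields an $e^{-cT}$ error, negligible against the polynomial target $T^{-1/4}$; this is the main (and minor) hurdle one has to dispatch.
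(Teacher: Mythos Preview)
Your reduction is correct and considerably more economical than the route the paper takes. In the paper, Theorem~\ref{theoMk0} is proved by reproducing the whole machinery of Theorem~\ref{theoMk} for $\tilde Z$ directly: the H\"older estimate (Lemma~\ref{lem:MAJOHOLDER1}), the conditional local limit theorem (Lemma~\ref{TLLcond3}) and the Berry--Esseen bound (Lemma~\ref{berry}) are each extended to the modified local times $\tilde N_n(\cdot)$; time-reversibility of $\tilde Z$ and the scaling hypothesis~\eqref{ass1} are re-verified separately; and then the Molchan-type argument of Section~\ref{sec:rwrs} is rerun, with the added complication that the deterministic cutoff $a_T$ must be replaced by a random time at which $\sum_{k}\varepsilon_k$ first lands in $d\mathbb N$. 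Your observation that $(\tilde Z_{T_j})_{j\ge 0}$ is itself a RWRS --- walk $(U_j)$ with centered, square-integrable, aperiodic step, same scenery $\xi$ --- bypasses all of this. The verifications you sketch are accurate; in particular $U_j-U_{j-1}=\sum_{i=1}^{G}\tilde X_i$ with $G$ geometric on $\{0,1,2,\ldots\}$ of parameter $\delta$, and the event $\{G=1\}$ (probability $\delta(1-\delta)>0$) places $\mathrm{supp}(\nu)$ inside the support of the step law.

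One genuine gap in your lower-bound step: you subtract $\PP(N_T>2\delta T)\le e^{-cT}$ and call it ``negligible against the polynomial target $T^{-1/4}$''. But the target of~\eqref{eqMklower0} is $T^{-1/4}(\log T)^{-c'}[H^{-1}(c''T^{-1/4})]^{-1}$, and under the sole assumption $\mathbb E[e^{\xi_1}]<\infty$ the factor $[H^{-1}(c''T^{-1/4})]^{-1}$ may decay faster than any exponential (for instance if $\PP(\xi_1=k)\asymp e^{-|k|}/(|k|(\log|k|)^2)$ then $H(t)\asymp 1/\log\log t$ and $H^{-1}(x)\asymp \exp\exp(c/x)$), in which case the subtraction swallows the main term. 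The fix is immediate and actually simplifies your argument: since $N_T\le T$ deterministically, take $n=T$ in the reverse inclusion to obtain
\[
\PP\Bigl(\max_{k\le T}\tilde Z_k\le 1\Bigr)\ \ge\ \PP\Bigl(\max_{j\le T}\tilde Z_{T_j}\le 1\Bigr)
\]
with no error term, and apply the lower bound of Theorem~\ref{theoMk} at horizon $T$. Bernoulli concentration is needed only for the upper bound, where the target really is of polynomial order.
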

Let us recall that $Z$ and $\tilde Z$ are stationary but non-markovian processes
with respect to the annealed distribution $\mathbb P$ and that they are markovian but non-stationary given the scenery $\xi$.

In Section \ref{preliminaryresults}, we prove some useful technical lemmas concerning the random walk $S$ as well as the random walk in random scenery
$Z$ and the analogous process $\tilde Z$. 
Section \ref{sec:rwrs} is devoted to the proof of Theorems \ref{theoMk} and \ref{theoMk0}.
\section{Preliminary results}\label{preliminaryresults}
For every $y\in\mathbb Z$ and every integer $n\ge 1$, we write $N_{n}(y)$ for the number of visits
of the walk $S$ to site $y$ before time $n$, i.e.
$$ N_n(y):=\#\{k=1,...,n\ :\ S_k=y\}.$$
Using this notation, we observe that $Z$ can be rewritten as follows:
$$Z_n=\sum_{y\in\mathbb Z}\xi_yN_n(y).$$
Analogously 
$$\tilde Z_n=\sum_{y\in\mathbb Z}\xi_y\tilde N_n(y), $$
with $\tilde N_n(y):=\#\{k=1,...,n\ :\ S_k=y\mbox{ and } \varepsilon_k=1\}$.
The behaviour of $(\tilde N_n(y))_y$ will appear to be very similar
to the behaviour of $(N_n(y))_y$, at least for our purpose.
\subsection{Preliminary results on the random walk}
We set $N_{n}^*=\sup_yN_{n}(y)$ and $R_n:=\#\{y\in\mathbb Z\ :\ N_n(y)>0\}$ for the number of sites that have been visited by the walk $S$ before time $n$.
\begin{lem}\label{Omega_n}
Let $\gamma\in(0,\frac 12)$. 
We set 
$$\Omega^{(1)}_n(\gamma):=\{N_n^*\le n^{\frac 12+\gamma},\ R_n\le  n^{\frac 12+\gamma}\}$$
There exists $C_\gamma>0$ such that
$$\mathbb P[\Omega^{(1)}_n(\gamma)]=1-O\left(\exp(-C_\gamma n^\gamma)\right).$$
\end{lem}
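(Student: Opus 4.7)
The plan is to handle separately the two conditions defining $\Omega^{(1)}_n(\gamma)$ and combine the bounds by a union.

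For the range, I would exploit the elementary geometric inclusion $R_n\le 2M_n+1$ with $M_n:=\max_{0\le k\le n}|S_k|$, which reduces the control of $\{R_n>n^{1/2+\gamma}\}$ to that of $\{M_n\ge (n^{1/2+\gamma}-1)/2\}$. I would then apply Doob's submartingale inequality to $e^{\lambda S_k}$, combined with $\EE[e^{\lambda X_1}]\le e^{C\lambda^2}$ for small $\lambda$. The latter is an exponential integrability requirement on $X_1$ in a neighbourhood of the origin, which appears to be tacitly in force in this preliminary lemma (a pure Chebyshev estimate based on the sole second moment would yield only polynomial decay, not $\exp(-C_\gamma n^\gamma)$). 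This gives $\PP(M_n>a)\le 2\exp(-ca^2/n)$ for $a=O(n)$, and taking $a\asymp n^{1/2+\gamma}$ produces an $\exp(-c_\gamma n^{2\gamma})$ bound, more than enough.

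For the maximum of the local time, I would use the inclusion
$$\{N_n^*>n^{1/2+\gamma}\}\ \subset\ \{M_n>n^{1/2+\gamma}\}\ \cup\ \bigcup_{|y|\le n^{1/2+\gamma}}\{N_n(y)>n^{1/2+\gamma}\},$$
so that the first event is already controlled by Step~1. For the union, the key point is that by the strong Markov property applied at the first hitting time $T_y:=\inf\{k\ge 0:S_k=y\}$, together with the translation invariance of the walk, $N_n(y)$ is stochastically dominated by $N_n(0)$. This gives
$$\PP\bigl(N_n^*>n^{1/2+\gamma}\bigr)\le \PP\bigl(M_n>n^{1/2+\gamma}\bigr)+\bigl(2n^{1/2+\gamma}+1\bigr)\,\PP\bigl(N_n(0)>n^{1/2+\gamma}\bigr),$$
reducing the task to controlling visits to a single site.

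The remaining task is to show $\PP\bigl(N_n(0)>n^{1/2+\gamma}\bigr)\le C\exp(-cn^\gamma)$ with enough margin to absorb the polynomial prefactor, and this is really the main obstacle: naive Chebyshev applied to $N_n(0)$ gives only polynomial decay, so one genuinely needs exponential concentration. My plan is to establish the uniform moment bound
$$\sup_{n\ge 1}\EE\bigl[\exp(\lambda N_n(0)/\sqrt n)\bigr]<\infty$$
for some $\lambda>0$ and then apply Markov's inequality. The standard route is the renewal representation $\{N_n(0)\ge m\}=\{\tau_1+\cdots+\tau_m\le n\}$, where the $\tau_i$ are the i.i.d.\ inter-visit times at $0$, combined with the local limit theorem tail $\PP(\tau_1>k)\asymp k^{-1/2}$; the desired exponential moment bound is then a classical small-ball estimate for the $\tfrac12$-stable subordinator. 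Once this is in hand, the three steps combine to give the claim.
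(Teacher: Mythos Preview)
Your approach has a genuine gap that you yourself flag but then dismiss: the control of $M_n=\max_{k\le n}|S_k|$ with stretched-exponential tails requires $\EE[e^{\lambda X_1}]<\infty$ for some $\lambda>0$, and this is \emph{not} assumed. The standing hypotheses on the walk are only that $X_1$ is centered with finite variance $\sigma_X^2$; no exponential moment is ever imposed. Under a bare second moment, Doob's inequality gives at best $\PP(M_n>n^{1/2+\gamma})=O(n^{-2\gamma})$, which is polynomial and cannot be absorbed into $O(\exp(-C_\gamma n^\gamma))$. Since your argument for $R_n$ goes entirely through $R_n\le 2M_n+1$, and your argument for $N_n^*$ also routes through $\PP(M_n>n^{1/2+\gamma})$ as one of the summands, both parts fail as written. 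There is no tacit exponential-moment assumption hiding anywhere in the paper.

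The paper's proof avoids $M_n$ altogether and uses a sub-multiplicativity argument that needs only $\EE[N_n^*]\sim c'\sqrt n$ (hence only finite variance). Concretely, for $R_n$ one quotes the known sub-multiplicativity $\PP(R_n\ge a+b)\le \PP(R_n\ge a)\PP(R_n\ge b)$ (Lemma~34 of \cite{TLL}), and for $N_n^*$ the paper proves the analogous inequality by applying the strong Markov property at $\tau_a:=\min\{k:N_k^*=a\}$. From $\PP(N_n^*\ge ab)\le(\PP(N_n^*\ge a))^b$ and Markov's inequality $\PP(N_n^*\ge 3\EE[N_n^*])\le 1/2$, one gets $\PP(N_n^*\ge \EE[N_n^*]\,n^\gamma)\le 2^{-\lfloor n^\gamma/3\rfloor}$, which is the desired bound. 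Your renewal/small-ball idea for $N_n(0)$ is correct in spirit, but to salvage your route you would need to bypass $M_n$ entirely---for instance by summing $\PP(N_n(y)>a)\le \PP(\tau_y\le n)\,\PP(N_n(0)\ge a)$ over $y$ and using $\sum_y\PP(\tau_y\le n)=\EE[R_n]=O(\sqrt n)$ as the polynomial prefactor; the sub-multiplicativity argument is both shorter and more transparent.
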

\begin{proof}
Due to Lemma 34 of \cite{TLL}, we know that 
there exists $c_\gamma>0$ such that 
$\mathbb P[R_n\le  n^{\frac 12+\gamma}]=1-O\left(\exp(-c_\gamma n^\gamma)\right)$. Let us prove that the argument therein can be adapted to prove the same result for $N_n^*$
instead of $R_n$.
Observe first that $a\mapsto \mathbb P[N_n^*\ge a]$ is sub-multiplicative. Indeed, let $a,b$ be two positive integers. Let us write $\tau_a:=\min\{k\ge 1,\ N_k^*=a\}$.
\begin{eqnarray*}
\mathbb P[N_n^*\ge a+b]&=&\sum_{j=1}^n\mathbb P[\tau_a=j,\ 
   N_n^*-N_j^*\ge b]\\
&\le&\sum_{j=1}^n\mathbb P[\tau_a=j,\ 
   \sup_y(N_n(y)-N_j(y))\ge b]\\
&\le&\sum_{j=1}^n\mathbb P[\tau_a=j]\mathbb P[N_{n-j}^*\ge b]\\
&\le&\mathbb P[N_n^*\ge a] \mathbb P[N_{n}^*\ge b].
\end{eqnarray*}
Hence $\mathbb P[N_n^*\ge a\, b]\le (\mathbb P[N_n^*\ge a])^b$
and so
\begin{eqnarray*}
\mathbb P[N_n^*\ge \mathbb E[N_n^*]n^\gamma]&\le&
\mathbb P[N_n^*\ge\lfloor 3\mathbb E[N_n^*]\rfloor]^{\lfloor n^\gamma/3\rfloor}\\
&\le& \left(\frac{\mathbb E[N_n^*]}{\lfloor 3\mathbb E[N_n^*]\rfloor}\right)^{\lfloor n^\gamma/3\rfloor}\le 
2^{-\lfloor n^\gamma/3\rfloor}.
\end{eqnarray*}
We conclude by using the fact that $\mathbb E[N_n^*]\sim c'\sqrt{n}$.
\end{proof}
%For the second technical lemma, we need to take into account the
%possible periodicity of the random walk $S$.
%Let us write $\varphi_X$ for the characteristic function of $X_1$,
%i.e. $\varphi_X(u):=\mathbb E\left[e^{iuX_1}\right]$ for every $u\in\mathbb R$.
%Since $X_1$ takes integer values, $\varphi_X(u)=1$ for every
%$u\in2\pi\mathbb Z$. Now let $d'$ be the positive integer such that
%$\{u\in\mathbb R\ :\ |\varphi_X(u)|=1\}=\frac{2\pi}{d'}\mathbb Z$
%and $d'_1:=\inf\{n\ge 1\ :\ e^{\frac{2i\pi}{d'} n\, X_1 }=1\ a.s.\}$.
%Observe that $1\le d'_1\le d'$.
%Recall that $S$ is said to be strongly aperiodic if $d'=1$, which is true if for example $\mathbb P(X_1=0)>0$.
%
%Hence in particular, when we consider $\tilde Z$, $S$ is strongly
%aperiodic. 
\begin{lem}\label{lem:MAJOHOLDER1} 
Let $\mu\in(0,1]$ and $\gamma\in(0,1/2)$ and $\vartheta>0$ such that $\gamma  > 2(1-\mu)\vartheta$.
For any $\delta\in (0,\frac\gamma 2-(1-\mu)\vartheta)$, we have
\begin{equation}\label{MAJOHOLDER1totale}
\mathbb P\left[ \sup_{y,z\in \mathbb Z,\ 0<|y-z|<n^{\vartheta}}\frac{|N_n(y)-N_n(z)|}{{|y-z|}^{\mu}}> n^{\frac 14+\gamma}\right]=O\left(e^{- n^{\frac\delta 2}}\right).
\end{equation}
Under the assumptions of Theorem \ref{theoMk0}, we also have
\begin{equation}\label{MAJOHOLDER1totale}
\mathbb P\left[ \sup_{y,z\in \mathbb Z,\ 0<|y-z|<n^{\vartheta}}\frac{|\tilde N_n(y)-\tilde N_n(z)|}{{|y-z|}^{\mu}}> n^{\frac 14+\gamma}\right]=O\left(e^{- n^{\frac\delta 2}}\right).
\end{equation}
\end{lem}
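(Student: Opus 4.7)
The plan is to combine a high $L^{2p}$-moment estimate on the occupation-time difference $N_n(y)-N_n(z)$ with a union bound restricted to the geometric range of $S$ controlled by Lemma~\ref{Omega_n}; the $\tilde N_n$ version will follow by first integrating out the independent Bernoulli sequence $(\varepsilon_k)_k$.

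The core analytic ingredient I would establish is a moment inequality of the shape
$$\mathbb{E}\bigl[(N_n(y)-N_n(z))^{2p}\bigr]\le (Cp)^{2p}\,|y-z|^{p}\,n^{p/2}\qquad(p\ge 1,\ y\ne z),$$
which is classical for one-dimensional lattice walks satisfying a local CLT: one expands the $2p$-th power of $\sum_{k=1}^n(\mathbf 1_{S_k=y}-\mathbf 1_{S_k=z})$ over ordered $2p$-tuples of times and exploits the kernel-gradient estimate $|p_\ell(y)-p_\ell(z)|\le C|y-z|/\ell$, with cancellations among signs delivering the power $|y-z|^p$. Fix $\gamma'>\delta/2$ sufficiently small; on $\Omega^{(1)}_n(\gamma')$ of Lemma~\ref{Omega_n} at most $n^{1/2+\gamma'+\vartheta}$ pairs $(y,z)$ with $|y-z|<n^\vartheta$ contribute non-trivially, and Markov's inequality with $t=|y-z|^\mu n^{1/4+\gamma}$ gives, for each such pair,
$$\PP\bigl[|N_n(y)-N_n(z)|>|y-z|^\mu n^{1/4+\gamma}\bigr]\le (Cp)^{2p}\,|y-z|^{p(1-2\mu)}\,n^{-2p\gamma}\le (Cp)^{2p}\,n^{-p\kappa},$$
where $\kappa:=2\gamma-\vartheta\max(1-2\mu,0)$. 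One checks directly that $\kappa\ge\gamma/2-(1-\mu)\vartheta>\delta$ in both regimes $\mu\ge 1/2$ and $\mu<1/2$. Choosing $p=\lfloor n^{\delta/2}\rfloor$, the product with $n^{1/2+\gamma'+\vartheta}$ becomes $\exp\!\bigl(-(\kappa-\delta+o(1))\,n^{\delta/2}\log n\bigr)$, which beats $\exp(-n^{\delta/2})$ comfortably, and the complement $\Omega^{(1)}_n(\gamma')^c$ contributes $O(\exp(-C_{\gamma'}n^{\gamma'}))=O(\exp(-n^{\delta/2}))$.

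For the $\tilde N_n$ statement, write $\tilde N_n(y)-\tilde N_n(z)=\sum_{k=1}^n\varepsilon_k(\mathbf 1_{S_k=y}-\mathbf 1_{S_k=z})$ with $(\varepsilon_k)_k$ i.i.d.\ $\{0,1\}$-valued and independent of $S$; integrating out the $\varepsilon_k$ first in the $2p$-th moment expansion uses $\mathbb E[\varepsilon_{k_1}\cdots\varepsilon_{k_{2p}}]\le 1$ and reduces to the same bound as for $N_n$ up to an enlarged constant. The rest of the argument is identical. The main obstacle I anticipate is securing the moment inequality with the sharp joint dependence $|y-z|^p\,n^{p/2}$: it is the factor $|y-z|^p$ (rather than the coarser $|y-z|^{p/2}$ one would get from a naive Cauchy--Schwarz) that allows the Hölder exponent $\mu$ to be taken up to $1$, and it rests on careful bookkeeping of the signed gradient factors along the time-ordered expansion.
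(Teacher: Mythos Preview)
Your approach is genuinely different from the paper's. The paper never estimates moments of $N_n(y)-N_n(z)$; instead it uses the excursion decomposition of Kesten--Spitzer: writing $M_k(y,z)$ for the number of visits to $z$ between the $k$-th and $(k+1)$-th visits to $y$, one has $N_{\tau_j(y)}(y)-N_{\tau_j(y)}(z)\le 1+\sum_{k=1}^{j-1}(1-M_k(y,z))$, where $(M_k)_{k\ge 1}$ are i.i.d.\ with mean $1$ and explicit law governed by the hitting parameter $p(|y-z|)$ satisfying $C_0^{-1}|y-z|^{-1}\le p(|y-z|)\le C_0|y-z|^{-1}$. A Chernoff bound with the choice $t=p(|y-z|)\,n^{-1/4-\gamma/2}$ and a second-order Taylor expansion of the m.g.f.\ give the per-pair tail $O\bigl(\exp(-c\,|y-z|^{\mu-1}n^{\gamma/2})\bigr)$, after which a union bound over $y,z$ (restricted via Lemma~\ref{Omega_n} and a crude Doob bound on $\sup_k|S_k|$) concludes. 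For $\tilde N_n$ the paper repeats the same scheme with $\tau_j,M_k$ replaced by their ``$\varepsilon_k=1$'' analogues $\tilde\tau_j,\tilde M_k$, computing the modified hitting parameter $\tilde p(x)=p(x)/(\delta+2p(x))$ and checking it obeys the same two-sided bound in $|x|^{-1}$. This route is fully self-contained and sidesteps the hard step in yours, namely the moment inequality $\EE\bigl[(N_n(y)-N_n(z))^{2p}\bigr]\le(Cp)^{2p}|y-z|^p\,n^{p/2}$, which with that $p$-dependence is not quite off-the-shelf; to secure the factor $|y-z|^p$ (rather than a spurious $\log n$) you will need either the potential-kernel identity $\sum_\ell(p_\ell(0)-p_\ell(x))=a(x)\sim c|x|$ iterated along the ordered time chain, or---ironically---the very excursion structure the paper exploits. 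One further caution on your $\tilde N_n$ reduction: the bound $\EE[\varepsilon_{k_1}\cdots\varepsilon_{k_{2p}}]\le 1$ only transfers the $N_n$ estimate if your $N_n$ proof controls $\sum_{\mathbf k}\bigl|\EE[\prod_i D_{k_i}]\bigr|$ termwise, since the $\EE[\prod_i D_{k_i}]$ are signed and inserting a factor in $[0,1]$ need not shrink a signed sum; a safe fix is to split $\tilde N_n(y)-\tilde N_n(z)=\delta\bigl(N_n(y)-N_n(z)\bigr)+\sum_k(\varepsilon_k-\delta)D_k$ and control the fluctuation term by Hoeffding conditionally on $S$.
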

\begin{proof}
Due to Lemma \ref{Omega_n}, it is enough to prove that
\begin{equation}\label{MAJOS_n1}
\mathbb P\Big[\sup_{k=1,...,n}|S_k|> e^{n^{\frac{\delta}{2}}}\Big]=O\left( e^{-n^{\frac{\delta}{2}} }\right)
\end{equation}
and that
\begin{equation}\label{MAJOHOLDER1}
\mathbb P\left[ \sup_{y,z\in E_n,\ 0<|y-z|<n^{\vartheta}}\frac{N_n(y)-N_n(z)}{{|y-z|}^{\mu}}> n^{\frac 14+\gamma}\right]=O\left(e^{- n^{\delta}}\right),
\end{equation}
where $E_n:=\{y\in\mathbb Z\ :\ |y|\le e^{n^{\frac{\delta}{2}}},\ N_n(y)\le n^{\frac 12+\gamma}\}$
(and the analogous estimate with $N_n(\cdot)$ replaced
by $\tilde N_n(\cdot)$ under the assumptions of Theorem
\ref{theoMk0}). 
We start with the proof of the first estimate. 
From Doob's inequality, there exists some constant $C>0$ such that
\begin{eqnarray*}
\mathbb P[\sup_{k=1,...,n}|S_k|>e^{n^{\frac{\delta}{2}}}]
&\le& C \ \mathbb E[S_n^2]\ e^{-2 n^{\frac{\delta}{2}}}= O\left(n e^{-2 n^{\frac{\delta}{2}}}\right)
\end{eqnarray*}
so \eqref{MAJOS_n1}.
Let us prove now the second estimate.
Let $\tau_j(y)$ be the $j$-th visit time of $(S_n)_n$ to $y$, that is
$$\tau_0(y):=0\quad\mbox{and}\quad\forall j\ge 0,\  \tau_{j+1}(y)=\inf\{k>\tau_j(y)\ :\ S_k=y\}$$
(resp. $\tilde\tau_0(y):=0$, $\tilde\tau_{j+1}(y)=\inf\{k>\tilde\tau_j(y)\ :\ S_k=y,\ \varepsilon_k=1\}$).
Let $y,z\in E_n$ be such that $N_n(y)-N_n(z)>0$, then there exists
$j\in\{1,...,\lfloor n^{\frac 12+\gamma}\rfloor\}$ such that $\tau_j(y)\le n<\tau_{j+1}(y)$ (observe that $\tau_{\small\lfloor n^{\frac 12+\gamma}\rfloor +1}(y)>n$). For this choice of $j$, we have
$$N_n(y)-N_n(z)\le N_{\tau_j(y)}(y)-N_{\tau_j(y)}(z).$$
Therefore
\begin{multline}
\mathbb P\left[ \sup_{y,z\in E_n,\ 0<|y-z|<n^{\vartheta}}\frac{N_n(y)-N_n(z)}{{|y-z|^\mu}}> n^{\frac 14+\gamma}\right]\\
\le\sum_{y,z\in E_n,\ 0<|y-z|<n^{\vartheta}}\sum_{j=1}^{\lfloor n^{\frac 12+\gamma}\rfloor}
    \mathbb P\left[N_{\tau_j(y)}(y)-N_{\tau_j(y)}(z)\ge {|y-z|^\mu}n^{\frac 14+\gamma}\right],\\
\le\sum_{y,z\in E_n,\ 0<|y-z|<n^{\vartheta}}\sum_{j=1}^{\lfloor n^{\frac 12+\gamma}\rfloor }
    \mathbb P\left[1+\sum_{k=1}^{j-1}(1-M_k(y,z))\ge {|y-z|^\mu}n^{\frac 14+\gamma}\right],\label{majoM_k}
\end{multline}
since $N_{\tau_j(y)}(y)-N_{\tau_j(y)}(z)=j-\sum_{k=0}^{j-1}M_k(y,z)\le j-\sum_{k=1}^{j-1}M_k(y,z)$,
where, following \cite{KS}, we write $M_k(y,z)$ for the number of visits of
$(S_n)_n$ to $z$ between its $k$-th and $(k+1)$-th visit to $y$, i.e.
$$M_k(y,z):=\sum_{\tau_{k}(y)<n\leq
\tau_{k+1}(y)}\mathbf 1_{\{S_n=z\}}.$$
Under assumptions of Theorem \ref{theoMk0}, \eqref{majoM_k}
still holds for $\tilde N_n(\cdot)$ instead of $N_n(\cdot)$
if we replace $\tau_j(y)$ by $\tilde\tau_j(y)$ and $M_k(y,z)$
by $\tilde M_k(y,z):=\sum_{\tilde\tau_{k}(y)<n\leq\tilde\tau_{k+1}(y)}\mathbf 1_{\{S_n=z,\varepsilon_n=1\}}$.

Due to the strong Markov property, $(M_k(y,z))_{k\ge 1}$ is a sequence of i.i.d.
random variables. Let us recall (see pages 13-14 in \cite{KS} for more details) that its
common law is given by
$$\mathbb P[M_k(y,z)=0]=1-p(|y-z|),\quad  \forall \ell\ge 1,\ \mathbb P[M_k(y,z)=\ell]=(1-p(|y-z|))^{\ell-1}(p(|y-z|))^2,$$
with $p(x)=p(-x)\sim \tilde c|x|^{-1}$. 
Observe also that, under the assumptions of Theorem \ref{theoMk0}, $(\tilde M_k(y,z))_{k\ge 1}$ is a sequence of i.i.d.
random variables with $\mathbb P[\tilde M_k(y,z)=0]=1-\tilde p(z-y)$ and $\mathbb P[\tilde M_k(y,z)=\ell]=\tilde p(z-y)(1-\tilde p(y-z))^{\ell-1}\tilde p(y-z)$ if $\ell\ge 1$
where $\tilde p(x)$ denotes the probability that $(S_n,\varepsilon_n)_{n\ge 1}$ visits $(x,1)$ before $(0,1)$. Observe that
\footnote{The fact that $(S_n,\varepsilon_n)_{n\ge 1}$ visits $(x,1)$ before $(0,1)$ means that $S$ visits $0$ several times (let us say $k$ times, with $k\ge 0$) before its first visit at $x$ but that $\epsilon_n=0$ at each of these visits to $0$ (this happens with probability $(1-\delta-p(x))^k$), that $S$ goes to $x$ before coming back to $0$ (this happens with probability $p(x)$) and finally
that, starting from $S=x$, $(S_n,\varepsilon_n)_{n\geq 1}$ visits $(x,1)$ before $(0,1)$
(this happens with probability $1-\tilde p(-x)$).}
$$\tilde p(x)=\sum_{k\ge 0}(1-\delta-p(x))^{k}p(x)(1-\tilde p(-x))=\frac{p(x)}{\delta+p(x)}(1-\tilde p(-x)).$$
Iterating this formula we obtain that $\tilde p(x)=
\frac {p(x)}{\delta+p(x)}\left(1-\frac{p(x)}{\delta+p(x)}(1-\tilde p(x))\right)=\frac{p(x)(\delta+p(x)\tilde p(x))}{(\delta+p(x))^2}$ which leads to
$$\frac{p(x)}{2+\delta}\le\tilde p(x)=\frac{p(x)}{\delta+2p(x)}\le \frac{p(x)}\delta. $$
%Due to Lemma 2 of \cite{KS}, we know that there exists $\tilde c>0$
%such that $p(x)\sim \tilde c\, x^{-1}$ as $x\rightarrow +\infty$ 
There exists $C_0>1$ such that
\begin{equation}\label{bornesp}
\forall x\ne 0,\quad C_0^{-1}|x|^{-1}\le p(x)\le C_0|x|^{-1}
\end{equation}
and
\begin{equation}\label{bornesp0}
\forall x\ne 0,\quad C_0^{-1}|x|^{-1}\le \tilde p(x)\le C_0|x|^{-1}.
\end{equation}
Observe that $M_1(y,z)$ has expectation 1
and admits exponential moment of every order:
$$\forall t>0,\quad G_{|y-z|}(t):=\mathbb E\left[e^{t(1-M_1(0,|y-z]))}\right]=\frac{(1-p(|y-z|))e^t-1+2p(|y-z|)}{1-(1-p(|y-z|))e^{-t}}.$$
Hence, for every positive integer $J\le n^{\frac 12+\gamma}$, due to the Markov inequality, 
we obtain that for every $t>0$,
$$\mathbb P\left[1+\sum_{k=1}^{J}(1-M_k(y,z))\ge{|y-z|^\mu} n^{\frac 14+\gamma}\right]=
\mathbb P\left[\exp\left(t+t\sum_{k=1}^{J}(1-M_k(y,z))\right)\ge \exp\left(t{|y-z|^\mu}\, n^{\frac 14+\gamma}\right)\right]$$
\begin{eqnarray*}
&\le&\exp\left(-t\,{|y-z|^\mu} n^{\frac 14+\gamma}\right)\mathbb E\left[\exp\left(t+t\,\sum_{k=1}^{J}(1-M_k(y,z))\right)\right]\\
&\le& \exp\left(-t\, {|y-z|^\mu}n^{\frac 14+\gamma}\right)(G_{|y-z|}(t))^Je^t\\
&\le& \exp\left(-t\, {|y-z|^\mu}n^{\frac 14+\gamma}\right)\left(\frac{(1-C_0^{-1}|y-z|^{-1})e^t-1+2C_0^{-1}|y-z|^{-1}}{1-(1-C_0^{-1}|y-z|^{-1})e^{-t}}\right)^Je^t\\
&\le& \exp\left(-t\, {|y-z|^\mu}n^{\frac 14+\gamma}\right)\left(\frac{(1-C_0^{-1}|y-z|^{-1})e^t-1+2C_0^{-1}|y-z|^{-1}}{1-(1-C_0^{-1}|y-z|^{-1})e^{-t}}\right)^{n^{\frac 12+\gamma}}e^t\\
\end{eqnarray*}
since the function $f:p\mapsto \frac{(1-p)e^t-1+2p}{1-(1-p)e^{-t}}$ is decreasing on $(0,1)$ such that $f(0)=e^t$ and $f(1)=1$.
Now using the Taylor expansion of $e^t$ at $0$, we observe that
$$ \frac{(1-p)e^t-1+2p}{1-(1-p)e^{-t}}=\frac{1+\frac qpt+\frac qp\frac{t^2}2+\frac qp O(t^3)}{1+\frac qpt-\frac qp\frac{t^2}2+\frac qp O(t^3)}$$
with $p=C_0^{-1}|y-z|^{-1}$ and $q=1-p$ where $O(t^3)$ is uniform in $p$.
Taking $t=p\, n^{-\frac 14-\frac\gamma 2}$, we obtain
$$ \frac{(1-p)e^t-1+2p}{1-(1-p)e^{-t}}=\frac{1+q\, n^{-\frac 14-\frac\gamma 2}+qp\frac{n^{-\frac 12-\gamma}}2+p^2 O(n^{-\frac 34-\frac{3\gamma} 2})}{1+q\, n^{-\frac 14-\frac\gamma 2}-qp\frac{n^{-\frac 12-\gamma}}2+p^2 O(n^{-\frac 34-\frac{3\gamma} 2})}=1+qp\, n^{-\frac 12-\gamma}+O(n^{-\frac 34-\frac{3\gamma} 2}).$$
and so
$$\mathbb P\left[1+\sum_{k=1}^{J}(1-M_k(y,z))\ge {|y-z|^\mu} n^{\frac 14+\gamma}\right]
=O\left(e^{-\, C_0^{-1}|y-z|^{\mu-1}n^{\frac\gamma 2}}\right)=O\left(e^{- C_0^{-1} n^{-(1-\mu)\vartheta+\frac\gamma 2}}\right).$$
Taking $\delta\in (0,\frac\gamma 2-(1-\mu)\vartheta)$
and combining this with \eqref{majoM_k}, 
we deduce \eqref{MAJOHOLDER1} and the analogous estimate for
$\tilde N_n(\cdot)$ instead of $N_n(\cdot)$ under the assumptions of Theorem \ref{theoMk0} (replacing $M_k$ by $\tilde M_k$
and  $p(\cdot)$ by $\tilde p(\cdot)$ in the above argument).
\end{proof}
\subsection{A conditional local limit Theorem for the RWRS}
Let $\varphi_\xi$ be the characteristic function of $\xi_1$.
Since $\xi_1$ takes integer values, $e^{2i\pi\xi_1}=1$ a.s. and
so $\varphi_\xi(u)=1$ for every $u\in 2\pi\mathbb Z$. Let us consider the positive integer $d$ such that $d\, \{u\ :\ |\varphi_\xi(u)|=1\}=2\pi\mathbb Z$. Another characterization of $d$ is that
it is the positive generator of the subgroup of $\mathbb Z$ generated by the $b-c$, with $b$ and $c$ in the support of the distribution of $\xi_1$ (i.e. by the support of the distribution of $\xi_0-\xi_1$).
Since the support of $\xi_1$ is not contained in a proper subgroup of $\mathbb Z$, we also have $d=\inf\{n\ge 1\, :\ e^{2i\pi n\xi_1/d}=1\ \text{a.s.}\}$.
Observe that $e^{\frac{2i\pi}d\xi_1}$ is almost surely constant
and so $(e^{\frac{2i\pi}d\xi_1})^2=\varphi_\xi\left(\frac {2\pi}d\right)^2
=e^{\frac{2i\pi}d(\xi_0+\xi_1)}$ almost surely.
Since the distribution of $\xi_1$ is symmetric, $\mathbb P(\xi_0+\xi_1=0)>0$ and so $(e^{\frac{2i\pi}d\xi_1})^2=1$ almost surely. Hence either $d{}=1$ (and  $e^{\frac{2i\pi}d\xi_1}=1$ a.s.) or $d{}= 2$ (and  $e^{\frac{2i\pi}d\xi_1}=-1$ a.s.).
The following lemma relates the conditional probability
$\mathbb P[\left. Z_{n}=0\, \right|S]$ to the self-intersection
local time $V_n$ of the random walk $S$ up to time $n$.
Let us recall that $V_n$ is given by
$$V_n:=\sum_{k,\ell=1}^n\ind_{\{S_k=S_\ell\}}=\sum_{k,\ell=1}^n\sum_{y\in\mathbb Z}\ind_{\{S_k=S_\ell=y\}}=\sum_y(N_n(y))^2.$$
Under the assumptions ot Theorem \ref{theoMk0}, we set 
$\tilde V_n:=\sum_y(\tilde N_n(y))^2$.
\begin{lem}\label{TLLcond3}
Let $\gamma\in(0,1/48)$.
There exists a sequence of $S$-measurable sets $(\Omega_{n}^{(0)}(\gamma))_n$, an integer $n_0>0$ and a positive
constant $c$ such that $\mathbb P(\Omega_n^{(0)}(\gamma))=1+O\left(e^{-n^{\frac\delta 2}}\right)$ for any $\delta<\frac\gamma 2$ and
such that, for every $n\ge n_0$ such that $n\in d{}\mathbb N$, the following inequalities hold on $\Omega_{n}^{(0)}(\gamma)$:
$$\mathbb P[\left. Z_{n}=0\, \right|S]\ge \frac{c}{\sqrt{V_{n}}},$$
$$n^{\frac 12-\gamma }\le N_{n}^*\le n^{\frac 12+\gamma },\quad
R_{n}\le n^{\frac 12+\gamma }  \quad\mbox{and}\quad     n^{\frac 32-\gamma }\le V_{n}\le n^{\frac 32+\gamma }. $$
Under the assumptions of Theorem \ref{theoMk0},
there exists a sequence of $(S,(\varepsilon_k)_k)$-measurable sets $(\tilde\Omega_{n}^{(0)}(\gamma))_n$, an integer $n_0>0$ and a positive
constant $c$ such that $\mathbb P(\Omega_n^{(0)}(\gamma))=1+O\left(e^{-n^{\frac\delta 2}}\right)$ for any $\delta<\frac\gamma 2$ and
such that, for every $n\ge n_0$ such that the following inequalities hold on $\tilde\Omega_{n}^{(0)}(\gamma)$:
$$\mathbb P\left[\left.\tilde Z_{n}=0\, \right|(S,(\varepsilon_k)_k)\right]\ge \frac{c}{\sqrt{\tilde V_{n}}}\ind_{\left\{\sum_{k=1}^n\varepsilon_k\in d{}\mathbb N^*\right\}},$$
$$n^{\frac 12-\gamma }\le \sup \tilde N_n(\mathbb Z)\le N_{n}^*\le n^{\frac 12+\gamma },\quad
R_{n}\le n^{\frac 12+\gamma }  \quad\mbox{and}\quad     n^{\frac 32-\gamma }\le \tilde V_{n}\le n^{\frac 32+\gamma }. $$
\end{lem}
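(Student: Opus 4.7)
The plan is to regard $Z_n$, conditionally on $S$, as the sum $\sum_y\xi_yN_n(y)$ of independent symmetric integer-valued random variables with total variance $\sigma_\xi^2V_n$, and to extract a conditional local limit theorem at the origin via Fourier inversion.

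I would simply set $\Omega_n^{(0)}(\gamma):=\Omega_n^{(1)}(\gamma)$ from Lemma \ref{Omega_n}, which already gives the upper bounds $N_n^*\le n^{\frac 12+\gamma}$ and $R_n\le n^{\frac 12+\gamma}$ with the required probability. The complementary lower bounds come for free from two instances of Cauchy--Schwarz: $n=\sum_yN_n(y)\le\sqrt{R_nV_n}$ forces $V_n\ge n^2/R_n\ge n^{\frac 32-\gamma}$, $n\le R_nN_n^*$ forces $N_n^*\ge n^{\frac 12-\gamma}$, while $V_n\le N_n^*\sum_yN_n(y)=nN_n^*\le n^{\frac 32+\gamma}$.

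The heart of the proof is the lower bound on
$$\mathbb P[Z_n=0\mid S]=\frac 1{2\pi}\int_{-\pi}^{\pi}\prod_{y\in\mathbb Z}\varphi_\xi(uN_n(y))\,du,$$
which I would split into three zones. On the central zone $|u|\le n^{-\frac 34+\gamma}$, the third-moment expansion $\varphi_\xi(t)=1-\frac 12\sigma_\xi^2t^2+O(|t|^3)$ combined with the deterministic bound $\sum_yN_n(y)^3\le N_n^*V_n\le n^{2+2\gamma}$ yields
$$\prod_y\varphi_\xi(uN_n(y))=(1+o(1))\exp\bigl(-\tfrac 12\sigma_\xi^2u^2V_n\bigr)$$
uniformly, so after the change of variable $v=u\sqrt{V_n}$ this zone contributes at least $c/\sqrt{V_n}$. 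When $d=2$, the analogous expansion at the secondary peak $u=\pm\pi$ (where $n\in d\mathbb N$ ensures constructive, not destructive, interference) gives another non-negative contribution of the same order. On the intermediate zone $n^{-\frac 34+\gamma}\le |u|\le \eta/N_n^*$ for a small fixed $\eta>0$, the factor-wise quadratic bound $|\varphi_\xi(t)|\le e^{-ct^2}$ for $|t|\le\eta$ applies to every factor, giving $\prod_y|\varphi_\xi(uN_n(y))|\le e^{-cu^2V_n}\le e^{-cn^\gamma}$, which is negligible. On the outer zone $|u|\ge \eta/N_n^*$, away from the peaks, one exploits $|\varphi_\xi(t)|\le 1-c_\eta<1$ off a neighbourhood of $(2\pi/d)\mathbb Z$, restricted to those $y$ for which $uN_n(y)\bmod 2\pi/d$ lies in this ``bad'' region; the bounds $R_n\ge n^{\frac 12-\gamma}$ and $N_n^*\le n^{\frac 12+\gamma}$ leave enough such sites to make the resulting bound super-polynomially small.

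For $\tilde Z_n$ the argument is identical after conditioning on $(S,(\varepsilon_k)_k)$ and replacing $N_n$ by $\tilde N_n$. Since, given $S$, the $\varepsilon_k$ are i.i.d. Bernoulli$(\delta)$, Bernstein-type concentration shows that $\tilde N_n^*$ and $\tilde V_n$ are of the same order as $\delta N_n^*$ and $\delta^2V_n$ up to exponentially small error, so $\tilde\Omega_n^{(0)}(\gamma)$ is obtained by intersecting $\Omega_n^{(0)}(\gamma)$ with this concentration event. The new indicator $\mathbf 1_{\{\sum_k\varepsilon_k\in d\mathbb N^*\}}$ arises because $\sum_y\tilde N_n(y)=\sum_k\varepsilon_k$ and $\xi_1$ has support in a coset of $d\mathbb Z$: the value $0$ belongs to the support of $\tilde Z_n$ only under this congruence, and conversely under this condition the peak at $u=\pm\pi$ when $d=2$ is again constructive. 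The main obstacle throughout is the outer-zone Fourier estimate: producing a deterministic lower bound on the good event, uniformly in $u$ and in the random configuration $\{N_n(y)\}$, for the number of sites $y$ at which $uN_n(y)\bmod 2\pi/d$ lies in the region where $|\varphi_\xi|$ is bounded away from $1$, and converting this into an estimate that comfortably beats $1/\sqrt{V_n}\asymp n^{-\frac 34}$, is the technical heart of the argument.
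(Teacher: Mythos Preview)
Your overall strategy---Fourier inversion plus a zone decomposition---matches the paper, and your treatment of the central and intermediate zones is fine (the paper collapses these into a single zone $|t|\le\beta/N_n^*$ and uses the direct bound $\varphi_\xi(u)\ge e^{-\sigma_\xi^2u^2}$ rather than a third-moment expansion, but your variant works). The Cauchy--Schwarz derivation of the lower bounds on $N_n^*$ and $V_n$ is also exactly what the paper does.

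The gap is in the outer zone, and it originates in your choice of good event. Setting $\Omega_n^{(0)}(\gamma):=\Omega_n^{(1)}(\gamma)$ gives you only $N_n^*\le n^{1/2+\gamma}$ and $R_n\le n^{1/2+\gamma}$ (and the derived lower bounds). These constraints alone do \emph{not} force enough sites $y$ to have $uN_n(y)$ bounded away from $(2\pi/d)\mathbb Z$. As a toy obstruction, consider a configuration with $N_n(y)\equiv m$ on $R_n$ sites (with $mR_n=n$); this satisfies all your constraints, yet $\int_{\eta/m}^{\pi/d}|\varphi_\xi(um)|^{R_n}\,du\asymp R_n^{-1/2}\asymp n^{-1/4}$, far larger than $V_n^{-1/2}\asymp n^{-3/4}$. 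Of course the random walk never produces such a configuration, but your good event does not exclude it, so no deterministic bound on that event can work.

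The paper resolves this by enlarging the list of conditions defining the good set. First it adds the Lipschitz bound $\sup_{y\ne z}|N_n(y)-N_n(z)|/|y-z|\le n^{1/4+\gamma}$ from Lemma~\ref{lem:MAJOHOLDER1}; this lets one locate a site $y_0$ with $N_n(y_0)$ in a prescribed window $[\tfrac\beta4 n^{1/2-\varepsilon},\tfrac\beta2 n^{1/2-\varepsilon}]$ and then $\asymp n^{1/4-\varepsilon-\gamma}$ neighbouring sites with local times in the same window, which handles the range $\beta n^{-1/2-\gamma}\le|t|\le n^{-1/2+\varepsilon}$. For the remaining range $n^{-1/2+\varepsilon}\le|t|\le\pi/d$ the paper further intersects with sets $\mathcal D_n$ and $\{\#\{z:N_n(z)\in\mathcal I\}\ge n^{1/2-2\gamma}/4\}$ imported from Section~2.8 of \cite{TLL}; these encode a combinatorial ``many distinct local-time values'' property (via the peaks/flat-peaks construction) that guarantees, for each such $t$, a supply of $\asymp n^{1/2-2\gamma}$ factors with $|\varphi_\xi(tN_n(z))|$ uniformly bounded away from $1$. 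Without these extra ingredients in the definition of $\Omega_n^{(0)}(\gamma)$, the outer-zone estimate simply fails.
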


\noindent {\bf Remark:} If we assume that $\varphi_\xi$ is non negative (in this case $\mathbb P(\xi_1=0)>0$), then there exists $c>0$ such that for every $n\ge 1$
$$\mathbb P[\left. Z_{n}=0\, \right|S]\ge \frac{c}{\sqrt{V_{n}}}.$$
Indeed, observe that
\begin{equation}\label{Fourier}
\mathbb P[Z_{n}=0|S]=\frac 1{2\pi}\int_{-\pi}^\pi\mathbb E\left[e^{itZ_{n}}|S\right]\, dt
=\frac 1{2\pi}\int_{-\pi}^\pi\prod_{y}
     \varphi_\xi(tN_{n}(y))\, dt.
\end{equation}
Remark that for every $y\in\mathbb Z$, $N_n(y)\le \sqrt{V_n}$. We know that $\varphi_\xi(t)-1\sim-\frac{\sigma^2_\xi}2\, t^2$.
Let $\beta>0$ be such that, for every real number $u$ satisfying $|u|<\beta$,
we have $\varphi_\xi(u)\ge e^{-{\sigma^2_\xi}u^2}$. Since
$\varphi_\xi$ is non negative, we have
\begin{eqnarray*}
\mathbb P[Z_{n}=0|S]&\ge&
\frac 1{2\pi}
\int_{-\beta/\sqrt{V_n}}^{\beta/\sqrt{V_n}}\prod_{y}  \left[\varphi_\xi(tN_{n}(y))\right]\, dt\\
&\ge &\frac 1{2\pi}
\int_{-\beta/\sqrt{V_n}}^{\beta/\sqrt{V_n}}\prod_{y} e^{-{\sigma^2_\xi}t^2(N_{n}(y))^2}\, dt\\
&=&\frac 1{2\pi}
\int_{-\beta/\sqrt{V_n}}^{\beta/\sqrt{V_n}}e^{-{\sigma^2_\xi}t^2V_{n}}\, dt\\
&\ge&\frac{1}{2\pi\sigma_\xi\sqrt{V_{n}}}\int_{|u|<\sigma_\xi\beta}e^{-u^2} \, du.
\end{eqnarray*}
The proof of Lemma \ref{TLLcond3} is based on the same idea.
The fact that $\varphi_\xi$ can take negative values 
complicates the proof.
\begin{proof}[Proof of Lemma \ref{TLLcond3}]
We have
\begin{equation}
\mathbb P[Z_{n}=0|S]=\frac 1{2\pi}\int_{-\pi}^\pi\mathbb E\left[e^{itZ_{n}}|S\right]\, dt
=\frac 1{2\pi}\int_{-\pi}^\pi\prod_{y}
     \varphi_\xi(tN_{n}(y))\, dt.
\end{equation}
Observe that $e^{2i\pi\xi_1/d}=\mathbb E[e^{2i\pi\xi_1/d}]$ almost surely
and so $\mathbb E[e^{2i\pi\xi_1/d}]^d=\mathbb E[e^{2i\pi\xi_1}]=1$.
Hence, for any integer $m\ge 0$ and any $u\in\mathbb R$, we have
$$\varphi_\xi\left(\frac{2m\pi}d+u\right)=
\left({\varphi_\xi\left(\frac{2\pi}d\right)}\right)^m\varphi_\xi(u) $$
and so
\begin{eqnarray}
\mathbb P[Z_{n}=0|S]&=&\frac 1{2\pi}\int_{-\pi}^\pi\mathbb E\left[e^{itZ_{n}}|S\right]\, dt\nonumber\\
&=&\frac 1{2\pi}\sum_{k=0}^{d-1}\int_{-\pi/d}^{\pi/d}\prod_{y}
     \left[\left({\varphi_\xi\left(\frac{2\pi}d\right)}\right)^{kN_{n}(y)}
\varphi_\xi(tN_{n}(y))\right]\, dt\nonumber\\
&=&\frac 1{2\pi}\sum_{k=0}^{d-1}
\left({\varphi_\xi\left(\frac{2\pi}d\right)}\right)^{kn}
\int_{-\pi/d}^{\pi/d}\prod_{y}  \left[\varphi_\xi(tN_{n}(y))\right]\, dt\nonumber\\
&=&\frac d{2\pi}
\int_{-\pi/d}^{\pi/d}\prod_{y}  \left[\varphi_\xi(tN_{n}(y))\right]\ind_{\{n\in d{}\mathbb N^*\}}\, dt\label{Fourier_bis}.
\end{eqnarray}
Under the assumptions of Theorem \ref{theoMk0}, proceeding analogously we obtain
\begin{equation}
\mathbb P[\tilde Z_{n}=0|(S,(\varepsilon_k)_k)]
=\frac d{2\pi}
\int_{-\pi/d}^{\pi/d}\prod_{y}  \left[\varphi_\xi(t\tilde N_{n}(y))\right]\ind_{\{\sum_{k=1}^n\varepsilon_k\in d{}\mathbb N^*\}}\, dt\label{Fourier_bis0},
\end{equation}
since $\sum_{y\in\mathbb Z}\tilde N_n(y)=\sum_{k=1}^n\varepsilon_k$.
%We assume from now on that $n\in d{}\mathbb Z$ (or that $\sum_{k=1}^n\varepsilon_k\in d{}\mathbb Z$ under the assumptions of Theorem \ref{theoMk0}).
We know that $\varphi_\xi(t)-1\sim-\frac{\sigma^2_\xi}2\, t^2$.
Let $\beta>0$ be such that, for every real number $u$ satisfying $|u|<\beta$,
we have $ e^{-{\sigma^2_\xi}u^2}\le\varphi_\xi(u)\le e^{-\frac{\sigma^2_\xi}4u^2}$ (observe that the fact that the distribution of $\xi$
is symmetric implies that $\varphi_\xi$ takes real values). 
Using the fact that $N_{n}(y)\le N_{n}^*\le \sqrt{V_{n}}$, we have
\begin{eqnarray*}
\frac d{2\pi}
\int_{-\beta/N_{n}^*}^{\beta/N_{n}^*}\prod_{y}  \left[\varphi_\xi(tN_{n}(y))\right]\, dt
&\ge &\frac d{2\pi}
\int_{-\beta/\sqrt{V_{n}}}^{\beta/\sqrt{V_{n}}}\prod_{y} e^{-{\sigma^2_\xi}t^2(N_{n}(y))^2}\, dt\\
&=&\frac d{2\pi}
\int_{-\beta/\sqrt{V_{n}}}^{\beta/\sqrt{V_{n}}}e^{-{\sigma^2_\xi}t^2V_{n}}\, dt\\
&\ge&\frac{d}{2\pi\sigma_\xi\sqrt{V_{n}}}\int_{|u|<\sigma_\xi\beta}e^{-u^2} \, du.
\end{eqnarray*}
This gives
\begin{equation}\label{INT1}
\frac d{2\pi}
\int_{-\beta/N_{n}^*}^{\beta/N_{n}^*}\prod_y  \varphi_\xi(tN_{n}(y))\, dt
   \ge \frac c{\sqrt{V_{n}}},
\end{equation}
for some positive constant $c$,
and analogously
\begin{equation}\label{INT1_bis}
\frac d{2\pi}
\int_{-\beta/\tilde N_{n}^*}^{\beta/\tilde N_{n}^*}\prod_y  \varphi_\xi(tN_{n}(y))\, dt
   \ge \frac c{\sqrt{\tilde V_{n}}},
\end{equation}
under the assumptions of Theorem \ref{theoMk0} if $\tilde V_n\ne 0$.

Let $\Omega_n(\gamma)$ be the set defined by
$$\Omega_n(\gamma)=\left\{R_{n}\le n^{\frac 12+\gamma }, N_n^*\le n^{\frac 12+\gamma },\  \sup_{y\ne z; |y-z|\leq n}\frac{|N_n(y)-N_n(z)|}{{|y-z|}}\le n^{\frac 14+\gamma}
\right\}.$$
Due to Lemmas \ref{Omega_n} and \ref{lem:MAJOHOLDER1} (applied with $\mu=1$
and $\vartheta=1$), $\mathbb P(\Omega_n(\gamma))=1+O\left(e^{-n^{\frac\delta 2}}\right)$ for any $\delta<\frac\gamma 2$.
On $\Omega_{n}(\gamma)$, due to the Cauchy-Schwartz inequality, we have 
$n=\sum_yN_n(y)\mathbf 1_{\{N_n(y)>0\}}\le\left(V_n\sum_{y}\mathbf 1_{\{N_n(y)>0\}}\right)^{\frac 12}\le \sqrt{R_n\, V_n}$  and so 
$V_{n}\ge n^{\frac 32-\gamma}$. Observe also that $V_n\le N_n^*\sum_y N_n(y)=n\,
N_n^*\le n^{\frac 32+\gamma}$.
Moreover $n=\sum_yN_{n}(y)\le R_{n}N_{n}^*$. Hence 
$N_{n}^*\ge n^{\frac 12-\gamma}$. This gives the three last inequalities in the first case.

Under the assumptions of Theorem \ref{theoMk0}, we set analogously
$$\tilde\Omega_n(\gamma)=\left\{R_{n}\le n^{\frac 12+\frac{3\gamma} 4}, N_n^*\le n^{\frac 12+\frac{3\gamma} 4},\  \sup_{y\ne z; |y-z|\leq n}\frac{|\tilde N_n(y)-\tilde N_n(z)|}{{|y-z|}}\le n^{\frac 14+\gamma},\
   \sum_{k=1}^n\varepsilon_k> \frac{n(1-\gamma)\delta}4\right\}.$$
If $n$ is large enough, on $\tilde\Omega_n(\gamma)$, we also obtain that
$n^{\frac 32-\gamma}\le \tilde V_{n}\le n^{\frac 32+\gamma}$
and $\sup \tilde N_n(\mathbb Z)\ge n^{\frac 12-\gamma}$
using the same arguments as above and the fact that $\sum_y\tilde N_n(y)=\sum_{k=1}^n\varepsilon_k$.

To end the proof of the lemma, it remains to prove the first inequality. Due to \eqref{Fourier_bis}
and \eqref{INT1}, it remains to prove that there exists $n_1>0$
such that, for every $n\ge n_1$, on $\Omega_{n}(\gamma)$,
we have
\begin{equation}\label{INT1b_bis}
\int_{\frac\beta{N_{n}^*} \le |t|\le\frac \pi d}
     \prod_y  |\varphi_\xi(tN_{n}(y))|\, dt
\le\int_{\beta n^{-\frac 12-\gamma} \le |t|\le\frac \pi d}
     \prod_y  |\varphi_\xi(tN_{n}(y)) |\, dt
   \le \frac{c}{2\sqrt{V_{n}}},
\end{equation}
and the analogous inequality obtained by replacing  $N_n(\cdot)$ by $\tilde N_n(\cdot)$ and $V_n$ by $\tilde V_n$, under the assumptions of Theorem \ref{theoMk0}. 
To this end, we will use elements of the proof of \cite{TLL} and more precisely the
proofs of Propositions 9 and 10 therein. 

We fix $\varepsilon>3\gamma$ such that $3\gamma+3\varepsilon<\frac 14$ (this is possible since
$\gamma<\frac 1{48}$).
We first follow the proof of Proposition 9 in \cite{TLL} (here $\varepsilon_0=\beta$) and more
precisely of Lemma 14 therein.
Let $y_1\in\mathbb Z$ be such that $N_n(y_1)= N_n^*$ and set $y_0:=\min\{y\ge y_1\, : \, 
N_n(y)\le\frac\beta 2 n^{\frac 12-\varepsilon}\}$. On $\Omega_n(\gamma)$, for $n$ large enough, $y_0>y_1$ (since $\varepsilon> \gamma$) and so $N_n(y_0-1)>\frac\beta 2 n^{\frac 12-\varepsilon}\ge N_n(y_0)$.
Moreover, still on $\Omega_n(\gamma)$,
$N_n(y_0-1)-N_n(y_0)\le n^{\frac 14+\gamma}$ which is smaller than
$\frac\beta 4 n^{\frac 12-\varepsilon}$ for $n$ large enough so that
$\frac\beta 4 n^{\frac 12-\varepsilon}\le N_n(y_0)\le \frac\beta 2 n^{\frac 12-\varepsilon}$.
Now, on $\Omega_n(\gamma)$, for every $z\in\mathbb Z$ such that $|y_0-z|\le e_n:=\frac{\beta}{10}n^{\frac 14-\varepsilon-\gamma}$, then
$$|N_n(z)-N_n(y_0)|\le |y_0-z|n^{\frac 14+\gamma}\le\frac\beta{10}n^{\frac 12-\varepsilon}$$
and so
$$\frac\beta{10}n^{\frac 12-\varepsilon}<N_n(z)<\beta n^{\frac 12-\varepsilon} ,$$
hence $|tN_n(z)|\le \beta$ if $n^{-\frac 12-\gamma}<|t|<n^{-\frac 12+\varepsilon}$ and so, on $\Omega_n(\gamma)$,
\begin{eqnarray*}
\prod_{y\in\mathbb Z}|\varphi_\xi(tN_n(y))|&\le& 
   \exp\left( - \frac{\sigma^2_\xi}4 t^2\sum_{z=y_0-e_n}^{y_0+e_n}(N_n(z))^2  \right)\\
&\le&\exp\left( - \frac{\sigma^2_\xi}4 n^{-1-2\gamma} 2e_n\frac{\beta^2}{100}n^{1-2\varepsilon}  \right)\\
&\le&\exp\left( - \frac{\sigma^2_\xi}2  n^{\frac 14-3\gamma-3\varepsilon} \frac{\beta^3}{10^3}  \right).
\end{eqnarray*}
Hence we have proved that, for $n$ large enough, on $\Omega_n(\gamma)$,
\begin{equation}\label{autourde1/2}
\int_{\beta n^{-\frac 12-\gamma} \le |t|\le n^{-\frac 12+\varepsilon}}
     \prod_y  | \varphi_\xi(tN_{n}(y))|\, dt
   \le \frac{c}{4\sqrt{V_{n}}}
\end{equation}
since $3\gamma+3\varepsilon<\frac 14$.

Under the assumptions of Theorem \ref{theoMk0}, the same argument gives
\begin{equation}\label{autourde1/20}
\int_{\beta n^{-\frac 12-\gamma} \le |t|\le n^{-\frac 12+\varepsilon}}
     \prod_y  | \varphi_\xi(t\tilde N_{n}(y))|\, dt
   \le \frac{c}{4\sqrt{\tilde V_{n}}}
\end{equation}
Now, Lemma 15 of \cite{TLL} still holds with our set $\Omega_n(\gamma)$ since the proof only uses
the fact that $N_n^*\le n^{\frac 12+\gamma}$ and that $R_n\le n^{\frac 12+\gamma}$.
Due to this remark and using the notations and results contained in Section 2.8 of \cite{TLL},
we take
for $\Omega_{n}^{(0)}(\gamma)$ the subset of $\Omega_{n}(\gamma)\cap \mathcal D_{n}$
on which $\#\{z\ :\ N_{n}(z)\in\mathcal I\}\ge {n}^{\frac 12-2\gamma}/4$ (with $\mathcal D_n$ and $\mathcal I$ being defined in Section 2.8 of \cite{TLL} applied with $\alpha=2$).
Since $\gamma<\frac 18$ and $3\gamma<\varepsilon<\frac 12$,
we obtain that $\mathbb P(\Omega_{n}(\gamma)\setminus\Omega_{n}^{(0)}(\gamma))=o(e^{-cn})$ for some $c>0$\footnote{Indeed, using the notations $\mathcal D_n$, $\mathcal E_n$ and $\mathcal I$ of Section 2.8 of \cite{TLL}, $\mathbb P(\mathcal D_n)=1-o(e^{-cn})$; moreover
following the proof of Lemma 15 of \cite{TLL} we obtain that $\Omega_n(\gamma)\cap\mathcal D_n\subset \mathcal E_n$, and finally, due to the remark following Lemma 17
of \cite{TLL}, $p_2(n):=\mathbb P(\mathcal E_n,\ \#\{z\ :\ N_{n}(z)\in\mathcal I\}< {n}^{\frac 12-2\gamma}/4)=o(e^{-cn})$. Therefore
$\mathbb P(\Omega_n(\gamma)\setminus\Omega_n^{(0)}(\gamma))\le\mathbb P(\Omega_n(\gamma)\setminus \mathcal D_n)+p_2(n)=o(e^{-cn})$.}. Moreover, there exists an integer $n_2$ such that if $n\ge n_2$, on $\Omega_{n}^{(0)}(\gamma)$ we have
\begin{equation}\label{INT4}
\forall t\in[n^{-\frac 12+\varepsilon},\frac \pi d],\quad
\prod_y  |\varphi_\xi(tN_{n}(y))|\le \exp(-n^{\gamma}) \leq  \frac{c}{4\sqrt{V_{n}}}
\end{equation}
(see the lines before the proof of Lemma 17 of \cite{TLL}).
The same argument (with the flat peaks instead of the peaks 
as explained in Section 5.4 of \cite{TLL} gives also (for every $n$ large enough)
\begin{equation}\label{INT40}
\forall t\in[n^{-\frac 12+\varepsilon},\frac \pi d],\quad
\prod_y  |\varphi_\xi(t\tilde N_{n}(y))|\le \exp(-n^{\gamma}) \leq  \frac{c}{4\sqrt{\tilde V_{n}}}
\end{equation}
on some set $\tilde\Omega_{n}^{(0)}(\gamma)$ such that 
$\mathbb P(\tilde\Omega_n(\gamma)\setminus\tilde\Omega_n^{(0)}(\gamma))=o(e^{-cn})$.
\end{proof}
\subsection{A conditional Berry-Esseen bound for RWRS}
Let $\Phi$ be the distribution function of the standard gaussian distribution, i.e. 
\begin{equation}\label{eqn:defnbarphi}
\forall u\in \RR,\quad\Phi(u)=\frac{1}{\sqrt{2\pi}} \int_{-\infty}^u e^{-\frac{x^2}{2} }\dd x.
\end{equation}
For $p\geq 1$, let us define the $p$-fold self-intersection local time of the random walk up to time $n$
$$Q_n^{(p)} := \sum_{y\in\ZZ} N_n(y)^p.$$
Under the assumptions of Theorem \ref{theoMk0}, we define
$$\tilde Q_n^{(p)}  := \sum_{y\in\ZZ} \tilde N_n(y)^p.$$
\begin{lem}\label{berry}
There exists a positive constant $C$ such that for every $n\ge 1$ 
$$\sup_{x\in\RR} \left| \PP\left[\frac{Z_n}{\sigma_{\xi}\sqrt{V_n}} \leq x \Big| S\right] - \Phi(x)\right|\leq C \frac{\EE[|\xi_0|^3]}{\EE[|\xi_0|^2]^{3/2}} \frac{Q_n^{(3)} }{ V_n^{3/2}}$$
and such that, under the assumptions of Theorem \ref{theoMk0},
$$\sup_{x\in\RR} \left| \PP\left[\frac{\tilde Z_n}{\sigma_{\xi}\sqrt{\tilde V_n}} \leq x \Big| (S,(\varepsilon_k)_k)\right] - \Phi(x)\right|\leq C \frac{\EE[|\xi_0|^3]}{\EE[|\xi_0|^2]^{3/2}} \frac{\tilde Q_n^{(3)} }{ \tilde V_n^{3/2}}.$$
\end{lem}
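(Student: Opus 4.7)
The plan is to recognize Lemma \ref{berry} as a direct application of the classical Berry-Esseen theorem applied conditionally on the trajectory of the random walk. Given the sigma-algebra $\sigma(S)$, the counts $N_n(y)$ become deterministic (and only finitely many are nonzero, since $R_n<\infty$), so
\[
Z_n=\sum_{y\in\mathbb Z}\xi_y\,N_n(y)
\]
is, conditionally on $S$, a finite sum of independent random variables $X_y:=\xi_y N_n(y)$, each centered (because $\xi_y$ is symmetric, hence centered), with conditional variance $\sigma_y^2=\sigma_\xi^2 N_n(y)^2$ and conditional third absolute moment $\rho_y=\mathbb E[|\xi_0|^3]N_n(y)^3$.

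Summing these, the total conditional variance is $\sigma_\xi^2 V_n$ and the total third-moment sum is $\mathbb E[|\xi_0|^3]\,Q_n^{(3)}$. Applying the classical Berry-Esseen theorem (Esseen's sharp form with absolute constant $C$) to the normalized sum $(\sigma_\xi\sqrt{V_n})^{-1}\sum_y X_y$ yields
\[
\sup_{x\in\mathbb R}\left|\mathbb P\Big[\frac{Z_n}{\sigma_\xi\sqrt{V_n}}\leq x\,\Big|\,S\Big]-\Phi(x)\right|\leq C\,\frac{\mathbb E[|\xi_0|^3]\,Q_n^{(3)}}{\sigma_\xi^3\,V_n^{3/2}},
\]
which, using $\sigma_\xi^2=\mathbb E[\xi_0^2]$, is exactly the stated bound. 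On the event $\{V_n=0\}$ (which has probability zero once $n\ge1$ since $N_n(0)\ge1$), there is nothing to prove.

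The second inequality, for $\tilde Z_n$ conditionally on $(S,(\varepsilon_k)_k)$, follows by the same argument verbatim: once we condition on $(S,(\varepsilon_k)_k)$, the numbers $\tilde N_n(y)$ are deterministic, the independence of the $\xi_y$'s from $(S,(\varepsilon_k)_k)$ is preserved, and
$\tilde Z_n=\sum_y\xi_y\tilde N_n(y)$ is a finite sum of conditionally independent centered random variables with second and third moment sums $\sigma_\xi^2\tilde V_n$ and $\mathbb E[|\xi_0|^3]\tilde Q_n^{(3)}$ respectively; the Berry-Esseen theorem yields the claim (the degenerate case $\tilde V_n=0$ is handled by the convention that the bound is trivially vacuous, since both sides can then be handled separately or the event excluded).

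There is essentially no obstacle here, since the only structural input used is the conditional independence of $(\xi_y N_n(y))_{y\in\mathbb Z}$ given $S$ (respectively given $(S,(\varepsilon_k)_k)$) together with the fact that $\xi_0$ is centered, square-integrable and has a finite third moment; both sides of the inequality are explicitly identified from the general Berry-Esseen constants and no cancellation or fine estimate is needed.
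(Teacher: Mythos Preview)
Your proposal is correct and follows exactly the paper's approach: the paper's proof is a one-line invocation of the Berry--Esseen theorem, noting that conditionally on $S$ (resp.\ on $(S,(\varepsilon_k)_k)$), $Z_n$ (resp.\ $\tilde Z_n$) is a sum of centered independent random variables $\xi_y N_n(y)$ (resp.\ $\xi_y \tilde N_n(y)$). One tiny quibble: your justification that $V_n>0$ via ``$N_n(0)\ge 1$'' is not quite right (the local time counts visits at times $1,\ldots,n$, so $N_n(0)$ can vanish), but $V_n\ge\sum_y N_n(y)=n\ge1$ gives the same conclusion.
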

%\begin{lem}\label{berry}
%There exists some positive constant $C$ such that for every $n\geq 1$, 
% $$\EE\left[\sup_{x\in\RR} \left| \PP\Big[\frac{Z_n}{\sqrt{V_n}} \leq x \Big| S \Big] - \Phi(x)\right|\right]\leq C n^{-1/4}.$$ 
%\end{lem}
\begin{proof}
This result directly follows from Berry-Esseen theorem since conditionally on the random walk, $Z_n$ (resp. $\tilde Z_n$) is the sum of centered,  independent random variables $\xi_yN_n(y)$ (resp. $\xi_y\tilde N_n(y)$ under the assumptions of Theorem \ref{theoMk0}).
%Take the expectation with respect to the random walk,
% \begin{eqnarray*}
% \EE\left[\sup_{x\in\RR} \left| \PP\Big[\frac{Z_n}{\sqrt{V_n}} \leq x \Big| S\Big] - \Phi(x)\right|\right]&\leq & C  \  \EE\left[(Q_n^{(3)})^2\right]^{1/2} \EE\left[ V_n^{-3}\right]^{1/2}\\
% &\leq & C n^{-1/4}  
% \end{eqnarray*}
% using Theorem in \cite{ } and Theorem in \cite{}
\end{proof}

\section{Proof of Theorems \ref{theoMk} and \ref{theoMk0}} \label{sec:rwrs}
\subsection{Relation to exponential functionals} \label{sec:expfunctionals}
The main idea is to relate the persistence probability to the exponential functional $\sum_{\ell=\ell_0}^{T}e^{Z_\ell}$ (with $\ell_0\in\{0,1\}$), cf.\ \cite{Molchan1999,Aurzada,BFFN,AurzadaBaumgartenjpa,castellguillotinwatbled}. In \cite{Molchan1999} it is shown that the continuous-time analog of this quantity behaves as $c T^{H-1}$ for any {\it continuous-time} $H$-self-similar process with stationary increments and a certain other time-reversibility property. Further, certain moment conditions are assumed in \cite{Molchan1999} (also see \cite{Molchan1999preprint}). We will apply the following lemma which does not have these moment conditions and in which $H$-self-similarity
(which does not make sense in discrete time) is replaced by (\ref{ass1}) extracting the "natural scaling'' of the process $Z$.
\begin{lem}\label{Mol}[see Lemma 5 in \cite{AGP}] Let $Z=(Z_n)_{n\in\NN}$ be a stochastic process with 
\begin{equation} \label{ass1}
\lim_{T\to+\infty} \frac{1}{T^{H}\ell(T)} \EE\left[ \sup_{  t\in[0,1]} Z_{[t T]} \right] = \kappa,
\end{equation}
for some $H\in(0,1)$, $\kappa\in(0,\infty)$, and with $\ell$ being a slowly varying function at infinity. Further assume that $Z$ is time-reversible in the sense that for any $T\in\NN$, the vectors $(Z_{T-k}-Z_T)_{k=0,\ldots,T}$ and $(Z_k)_{k=0,\ldots,T}$ have the same law. 
Then, 
$$\limsup_{x\rightarrow +\infty} \frac{x^{1-H}}{ \ell(x)} \EE\Big[ \Big(\displaystyle\sum_{l=0}^{[x]} e^{Z_l}\Big)^{-1} \Big]\leq \kappa H$$
and
$$\liminf_{x\rightarrow +\infty} \frac{x^{1-H}}{\ell(x)} \EE\Big[ \Big(\displaystyle\sum_{l=1}^{[x]} e^{Z_l}\Big)^{-1} \Big]\geq \kappa H.$$
%Moreover,  assume that 
%\begin{equation} \label{ass2}
%\EE[e^{-2 Z_n}] = o(n^{H-1} \ell(n)),
%\end{equation}
%then
%\begin{equation} \label{equality}
%\lim_{x\rightarrow +\infty} \frac{x^{1-H}}{ \ell(x)} \EE\Big[ \Big(\displaystyle\sum_{l=0}^{[x]} e^{Z_l}\Big)^{-1} \Big]=
%\lim_{x\rightarrow +\infty} \frac{x^{1-H}}{\ell(x)} \EE\Big[ \Big(\displaystyle\sum_{l=1}^{[x]} e^{Z_l}\Big)^{-1} \Big] = \kappa H.
%\end{equation}
\end{lem}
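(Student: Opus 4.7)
The plan is to implement Molchan's time-reversal argument (as in \cite{Molchan1999}) in discrete time and with the non-self-similar scaling~\eqref{ass1} replacing exact $H$-self-similarity. Set $I_T:=\sum_{l=0}^T e^{Z_l}$ and $M_T:=\max_{0\le k\le T}Z_k$. Since $e^{M_T}\le I_T\le (T+1)e^{M_T}$ and $\log T=o(T^H\ell(T))$, hypothesis~\eqref{ass1} immediately upgrades to $\EE[\log I_T]=\kappa T^H\ell(T)(1+o(1))$, which is the only consequence of~\eqref{ass1} that I will actually use.

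The first ingredient is a time-reversal identity. By the reversibility hypothesis, $(Z_{T-k}-Z_T)_{k=0,\ldots,T}$ has the same joint law as $(Z_k)_{k=0,\ldots,T}$; factoring $I_T=e^{Z_T}\sum_{k=0}^T e^{Z_{T-k}-Z_T}$ and $I_{T-1}=e^{Z_T}\sum_{k=1}^T e^{Z_{T-k}-Z_T}$ and taking expectations then yields
\[
\EE\!\left[\frac{e^{Z_T}}{I_T}\right]=\EE\!\left[\frac{1}{I_T}\right],\qquad \EE\!\left[\frac{e^{Z_T}}{I_{T-1}}\right]=\EE\!\left[\frac{1}{\sum_{l=1}^T e^{Z_l}}\right].
\]

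The second ingredient is a telescoping identity coming from the concavity of $\log$: since $x/(1+x)\le \log(1+x)\le x$, plugging in $x=e^{Z_t}/I_{t-1}$ gives
\[
\frac{e^{Z_t}}{I_t}\le \log\bigl(I_t/I_{t-1}\bigr)\le \frac{e^{Z_t}}{I_{t-1}}.
\]
Summing from $t=1$ to $T$ (with $I_0=1$), taking expectations, and applying the two reversal identities term by term, one arrives at the Cesàro-type inequalities
\[
\sum_{t=1}^T \EE\!\left[\frac{1}{\sum_{l=0}^t e^{Z_l}}\right]\le \EE[\log I_T]\le \sum_{t=1}^T \EE\!\left[\frac{1}{\sum_{l=1}^t e^{Z_l}}\right],
\]
i.e., writing $u_T:=\EE[1/I_T]$ and $v_T:=\EE[1/\sum_{l=1}^T e^{Z_l}]$,
\[
\sum_{t=1}^T u_t\le \kappa T^H\ell(T)(1+o(1)),\qquad \sum_{t=1}^T v_t\ge \kappa T^H\ell(T)(1-o(1)).
\]

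The final step is to invert these Cesàro bounds into pointwise bounds by a Karamata-type Tauberian argument for nonincreasing sequences. Both $u_T$ and $v_T$ are nonincreasing in $T$, and since $e^{Z_0}=1$ one checks directly that $u_t\le v_t\le 2u_t$, so the two partial sums are of the same regular-variation order $T^H\ell(T)$. Working on the window $[T/\lambda,T]$ for $\lambda>1$, writing $(T-[T/\lambda])u_T\le \sum_{t=[T/\lambda]+1}^T u_t$, and then letting $\lambda\to 1^+$, the elementary limit $(1-\lambda^{-H})/(1-1/\lambda)\to H$ produces the Karamata factor~$H$ and delivers
\[
\limsup_{T\to\infty}\frac{T^{1-H}}{\ell(T)}u_T\le \kappa H,\qquad \liminf_{T\to\infty}\frac{T^{1-H}}{\ell(T)}v_T\ge \kappa H,
\]
which is exactly the statement of the lemma.

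The main obstacle I expect is this last inversion. The naive monotonicity inequality $Tu_T\le\sum_{t=1}^T u_t$ only yields the constant $\kappa$, not $\kappa H$; to capture the sharp Karamata factor one needs both an upper and a lower Cesàro bound of the same leading order, and the upper bound from Step~2 is only available for $\sum u_t$ while the lower bound is only available for $\sum v_t$. The comparison $u_t\le v_t\le 2u_t$ is therefore crucial, as it transfers each one-sided Cesàro bound onto the other sequence with the same regular-variation index, allowing the window-argument to close with the optimal constant. This coordination of the two sequences is the delicate part of the proof.
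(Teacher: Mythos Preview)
The paper does not prove this lemma; it simply quotes it from \cite{AGP}. So there is no proof in the paper to compare against, and your argument stands or falls on its own. The first two ingredients --- the reversal identities $\EE[e^{Z_T}/I_T]=\EE[1/I_T]$, $\EE[e^{Z_T}/I_{T-1}]=\EE[1/\sum_{l=1}^T e^{Z_l}]$ and the sandwich $e^{Z_t}/I_t\le\log(I_t/I_{t-1})\le e^{Z_t}/I_{t-1}$ --- are exactly Molchan's mechanism, correctly transported to discrete time, and the conclusion $\EE[\log I_T]=\kappa T^H\ell(T)(1+o(1))$ is fine.

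The gap is in the final inversion, and it is precisely the one you flag yourself but do not actually close. First, the pointwise comparison $v_t\le 2u_t$ is false in general: it is equivalent to $\sum_{l=1}^t e^{Z_l}\ge 1$ almost surely, which need not hold under the hypotheses of the lemma. Second, and more importantly, even a true inequality $v_t\le C u_t$ with $C>1$ would not salvage the window argument with the sharp constant. It would give
\[
(T-T/\lambda)\,u_T\;\le\;\sum_{t\le T}u_t-\sum_{t\le T/\lambda}u_t\;\le\;\kappa T^H\ell(T)\bigl(1-C^{-1}\lambda^{-H}+o(1)\bigr),
\]
and the factor $(1-C^{-1}\lambda^{-H})/(1-1/\lambda)$ diverges as $\lambda\to1^+$ whenever $C>1$; only $C=1$ produces the limit $H$. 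A constant-factor comparison transfers the regular-variation \emph{index}, not the leading \emph{constant}, which is exactly what is at stake here.

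The correct route is already implicit in your Step~2: work directly with the telescoping increments $a_t:=\EE[\log I_t]-\EE[\log I_{t-1}]$, which by your sandwich satisfy $u_t\le a_t\le v_t$. Their partial sums equal $\EE[\log I_T]$ and therefore enjoy a \emph{two-sided} asymptotic $\kappa T^H\ell(T)(1+o(1))$. Then, using only monotonicity of $u$ and $u_t\le a_t$,
\[
(T-[T/\lambda])\,u_T\;\le\;\sum_{[T/\lambda]<t\le T}a_t\;=\;\EE[\log I_T]-\EE[\log I_{[T/\lambda]}]\;=\;\kappa T^H\ell(T)\bigl(1-\lambda^{-H}+o(1)\bigr),
\]
and now $\lambda\to1^+$ gives $\limsup T^{1-H}u_T/\ell(T)\le\kappa H$. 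The liminf for $v$ is symmetric, using $v_t\ge a_t$ and $(T-[T/\lambda])\,v_{[T/\lambda]}\ge\sum_{[T/\lambda]<t\le T}a_t$. No comparison between $u$ and $v$ is needed.
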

Note the difference in the summation $l=0,\ldots$ vs.\ $l=1,\ldots$, which complicates the use of this lemma to prove the lower bounds in
Theorems \ref{theoMk} and \ref{theoMk0}. Our additional assumptions for the lower bounds of Theorems  \ref{theoMk} and \ref{theoMk0} come from the fact that
the sum starts from 1 in the second inequality of Lemma \ref{Mol}.

\subsection{Verification of Lemma~\ref{Mol} for RWRS}
The goal of this subsection is to verify that Lemma~\ref{Mol} holds with $H:=3/4$ and $\ell\equiv 1$ for the RWRS $Z$ and for $\tilde Z$. 

We first show that RWRS is time-reversible. Note that
$$
Z_{T-k}-Z_T = \sum_{j=1}^{T-k} \xi_{S_j} -  \sum_{j=1}^{T} \xi_{S_j} =  -  \sum_{j=T-k+1}^{T} \xi_{S_j}, \qquad k=0,\ldots, T.
$$
By conditioning on the random walk and using the symmetry of the environment as well as the fact that the environment is i.i.d.\ (and thus spatially homegeneous), the above vector has the same distribution as
$$
 \sum_{j=T-k+1}^{T} \xi_{S_j-S_{T+1}} =  \sum_{j=1}^{k} \xi_{S_{T-j+1}-S_{T+1}}, \qquad k=0,\ldots, T.
$$
Since $(\xi_y)_y$ and $(\xi_{-y})_y$ have the same distribution, the above vector has the same distribution as
\begin{equation}
\sum_{j=1}^{k} \xi_{S_{T+1}-S_{T-j+1}}, \qquad k=0,\ldots, T. \label{eqn:refreqtimereversible}
\end{equation}
Now we condition on the environment and use that
$$
S_{T+1}-S_{T-j+1}=\sum_{i=T-j+2}^{T+1} X_i=\sum_{i=1}^{j} X_{T+2-i} ,\qquad j=0,\ldots, T,
$$
has the same law as $(S_j)_{j=0,\ldots, T}$, which in connection with (\ref{eqn:refreqtimereversible}) shows the claim that $Z$ is time-reversible.
 
Under the assumptions of Theorem \ref{theoMk0}, using the fact that that $\tilde Z_k=\sum_{\ell=1}^k\xi_{S_{\ell-1}}\varepsilon_\ell=\sum_{\ell=1}^k\xi_{S_\ell}\varepsilon_\ell$ for every positive integer $T$, the vector
$$(\tilde Z_{T-k}-\tilde Z_T)_{k=0,...,T}=\left(-\sum_{\ell=T-k+1}^T\xi_{S_{\ell-1}}\varepsilon_\ell\right) $$
has the same distribution as
$$\left(\sum_{\ell=T-k+1}^T\xi_{S_T-S_{\ell-1}}\varepsilon_\ell\right)_{k=0,...,T} =
\left(\sum_{\ell=T-k+1}^T\xi_{\tilde X_\ell(1-\varepsilon_\ell)+...+\tilde X_T(1-\varepsilon_T)}\varepsilon_\ell\right)_{k=0,...,T}$$
(since $(\xi_y)_y$ and $(-\xi_{S_T-y})_y$ have the same distribution
given $(S,(\varepsilon_k)_k)$),
which has the same distribution as
$$\left(\sum_{\ell=T-k+1}^T\xi_{\tilde X_{T-\ell+1}(1-\varepsilon_{T-\ell+1})+...+\tilde X_1(1-\varepsilon_1)}\varepsilon_{T-\ell+1}\right)_{k=0,...,T}=
\left(\sum_{\ell=1}^k\xi_{S_ \ell}\varepsilon_{\ell}\right)_{k=0,...,T}=\left(\tilde Z_k\right)_{k=0,...,T} $$
(since $(\tilde X_\ell,\varepsilon_\ell)_{\ell=1,...,T}$ and
$(\tilde X_{T-\ell+1},\varepsilon_{T-\ell+1})_{\ell=1,...,T}$
have the same distribution given $\xi$). Hence we have proved the
time-reversibility of $\tilde Z$.

Now let us verify (\ref{ass1}).
Note that the sequence of random variables $T^{-3/4}\max_{k=1,\ldots,T} Z_k$ is uniformly bounded in $L^2$: Indeed, given $S$, 
the random variable $Z_n$ is a sum of associated random variables with zero mean and finite variance, so from Theorem~2 in \cite{NW}, $$\EE[ (\max_{k=1,\ldots, T} Z_k)^2 | S] \leq \EE [ Z_{T}^2| S] = V_{T}.$$
By integrating with respect to the random walk, we get $$\EE[ (\max_{k=1,\ldots, T} Z_k)^2] \leq \EE [ V_{T}] \sim C T^{3/2},$$
cf.\ (2.13) in \cite{KS}.
Since the sequence of processes  $( Z_{[tT]} / T^{3/4} )_{t\geq 0}$ weakly converges for the Skorokhod topology to the process $(\Delta_t)_{t\geq 0}$ (see \cite{KS} and the remark following Theorem 2 of \cite{TLL}), we get 
$$ \lim_{T\rightarrow +\infty}\EE\Big[ \sup_{t\in [0,1]} \Big(\frac{Z_{[tT]}}{T^{3/4}}\Big)\Big] =
\EE\Big[ \sup_{t\in [0,1]} \Delta_t\Big] =:\kappa,$$
which is known to be finite using Proposition 2.1 in \cite{BFFN}.

Under assumptions of Theorem \ref{theoMk0}, we proceed
analogously to prove that (\ref{ass1}) holds for $\tilde Z$.
We obtain 
$$\EE[ (\max_{k=1,\ldots, T} \tilde Z_k)^2] \leq \EE [ \tilde V_{T}] \le\EE[V_T]\sim C T^{3/2},$$
cf.\ (2.13) in \cite{KS}.
The fact that the sequence of processes  $(\tilde{Z}_{[tT]} / T^{3/4} )_{t\geq 0}$ weakly converges for the Skorokhod topology to the process $(K_{\delta} \Delta_t)_{t\geq 0}$ where $K_{\delta} = \frac{\delta}{(1-\delta)^{1/4}}$ has been proved in \cite{GPN} and so 
$$ \lim_{T\rightarrow +\infty}\EE\Big[ \sup_{t\in [0,1]} \Big(\frac{\tilde Z_{[tT]}}{T^{3/4}}\Big)\Big] =K_{\delta}
\EE\Big[ \sup_{t\in [0,1]} \Delta_t\Big] =:\tilde\kappa.$$

\subsection{Proof of the upper bound} 

As in \cite{Molchan1999} and \cite{Aurzada}, the main idea in the proof of the upper
bound in \eqref{eqMk}, is to bound the exponential functionals $\left(\sum_{k=0}^Te^{Z_k}\right)^{-1}$ of Lemma~\ref{Mol}
 from below 
by restricting the expectation  to a well-chosen set of paths.

\noindent Conditionally on $S$, $Z_k$
is the sum of centered and positively associated random variables. 
It follows that for every $0\le u<v<w$ and all real numbers $a,b$, 
\begin{equation}\label{slepian1}
\PP\left[\max_{k=u,\ldots,v} Z_k\le a,\ \max_{k=v+1,\ldots,w}Z_k\le b\Big|  S \right]\ge  
    \PP\left[\max_{k=u,\ldots,v}Z_k\le a\Big|  S \right]
    \PP\left[\max_{k=v+1,\ldots,w}Z_k\le b\Big|  S \right]
\end{equation}
\begin{equation}\label{slepian2}
\PP\left[\max_{k=u,\ldots,v}Z_k\le a,\ \max_{k=v+1,\ldots,w}(Z_k-Z_v)\le b \Big|  S \right]\ge  
    \PP\left[\max_{k=u,\ldots,v}Z_k\le a \Big|  S \right]\PP\left[\max_{k=v+1,\ldots,w}(Z_k-Z_v)\le b\Big|  S\right],
\end{equation}
and analogously, under the assumptions of Theorem \ref{theoMk0},
\begin{multline}\label{slepian10}
\PP\left[\max_{k=u,\ldots,v} \tilde Z_k\le a,\ \max_{k=v+1,\ldots,w}\tilde Z_k\le b\Big|  (S,(\varepsilon_k)_k) \right]\ge  
    \PP\left[\max_{k=u,\ldots,v}\tilde Z_k\le a\Big|  (S,(\varepsilon_k)_k) \right]\\
    \PP\left[\max_{k=v+1,\ldots,w}\tilde Z_k\le b\Big|  (S,(\varepsilon_k)_k) \right]
\end{multline}
\begin{multline}\label{slepian20}
\PP\left[\max_{k=u,\ldots,v}\tilde Z_k\le a,\ \max_{k=v+1,\ldots,w}(\tilde Z_k-\tilde Z_v)\le b \Big| (S,(\varepsilon_k)_k) \right]\ge  
    \PP\left[\max_{k=u,\ldots,v}\tilde Z_k\le a \Big|  (S,(\varepsilon_k)_k) \right]\\
\PP\left[\max_{k=v+1,\ldots,w}(\tilde Z_k-\tilde Z_v)\le b\Big|  (S,(\varepsilon_k)_k)\right].
\end{multline}
Let us precise that these inequalities will play the role of the Slepian Lemma in \cite{BFFN,castellguillotinwatbled,AGP}.
Let $a_T\ge (\log T)^6$ and 
set $\beta_T:=\sigma_{\xi}\sqrt{V_{a_T}}$.
Let us define the random function 
$$\phi(k):= \left\{\begin{array}{ll}
  1  & \mbox{ for } 0 \leq k < a_T \, , \\
1-\beta_T  &  \mbox{ for } a_T\leq k \leq T \, ,
\end{array} 
\right.
$$ 
which is $S$-measurable.
Clearly, we have 
\begin{equation} \label{eqn:argumentphi1}
\EE\left[\left( \sum_{k=0}^T e^{Z_k}\right)^{-1}\Big| S \right] \geq 
\left( \sum_{k=0}^T e^{\phi(k)} \right)^{-1}  \PP\Big[\forall k\in\{0,\ldots,T\}, Z_k\leq \phi(k)\Big|  S \Big].
\end{equation}
From (\ref{slepian1}), we have
\[
 \PP\Big[\forall k\in\{0,\ldots,T\}, Z_k\leq \phi(k)\Big|  S \Big]
 \geq   \PP\Big[\max_{k=0,\ldots,a_T} Z_k\leq 1\Big|  S  \Big] \,  \PP\Big[\max_{ k=a_T,\ldots,T} 
Z_k\leq 1-\beta_T\Big|  S \Big].
\]
Note that
\begin{eqnarray*}
\PP\Big[\max_{k=a_T,\ldots,T} Z_k\leq 1-\beta_T \Big|  S \Big] 
& \geq & \PP\Big[Z_{a_T} 
\leq -\beta_T ; \max_{k=a_T,\ldots,T} (Z_k- Z_{a_T}) \leq 1\Big|  S \Big]
\\
& \geq & \PP\Big[Z_{a_T} \leq -\beta_T\Big| S \Big] \cdot \PP\Big[\max_{k=a_T,\ldots,T} 
(Z_k - Z_{a_T} )\leq 1\Big|  S \Big],
\end{eqnarray*}
by (\ref{slepian2}). 
Moreover, it is easy to check that for every $T>1$
$$\sum_{k=0}^T e^{\phi(k)}  \leq e  (a_T+1+T e^{-\beta_T}).$$
In the following, $C$ is a constant whose value may change but does not depend on $T$. 
Then, summing up (\ref{eqn:argumentphi1}) and the succeeding estimates, we can write that for $T$ large enough
\begin{equation}\label{firststep}
(a_T+Te^ {-\beta_T})\EE\left[\left(\sum_{k=0}^T e^{Z_k} \right)^{-1}\Big| S \right]
\geq C \PP\left[Z_{a_T} \leq -\beta_T| S\right]\,
\PP\big[\max_{k=0,\ldots,a_T}Z_k\leq 1\big| S \big]\,
\PP\big[\max_{k=a_T,\ldots,T}(Z_k-Z_{a_T})\leq 1 \big| S \big].
\end{equation}
The two first probabilities in the right hand side of (\ref{firststep}) can be approximated with the distribution function of the standard Gaussian law $\mathcal{N}(0,1)$. The error by doing this approximation will be controlled by using Lemma~\ref{berry}.
Indeed, we have 
\begin{eqnarray}\label{morceau}
\PP\left[Z_{a_T} \leq -\beta_T| S\right] &=& \PP\left[Z_{a_T} \leq -\beta_T| S\right] - \Phi(-1) +\Phi(-1)\\
&\ge& \Phi(-1)-\tilde C \frac{\EE[|\xi_0|^3]}{\EE[|\xi_0|^2]^{3/2}} \frac{Q_{a_T}^{(3)} }{ V_{a_T}^{3/2}}\nonumber\\
&\ge& \Phi(-1)-\tilde C  \frac{V_{a_T} N_{a_T}^*}{ V_{a_T}^{3/2}}\nonumber\\
&\ge& \Phi(-1)-\tilde C  \frac{ N_{a_T}^*}{ \sqrt{V_{a_T}}}
%\nonumber\\
%&\ge& \Phi(-1)-\tilde C  \frac{ N_{a_T}^*\sqrt{R_{a_T}}}{a_T}
,\label{morceaubis}
\end{eqnarray}
%where we used $n=\sum_yN_n(y)=\sum_yN_n(y)\mathbf1_{N_n(y)\ne 0}\le (\sum_y 1_{N_n(y)\ne 0})^{\frac 12}(V_n)^{\frac 12}=\sqrt{R_nV_n}$ (due to the Cauchy-Schwarz inequality).
Moreover since the law of the random scenery is symmetric, from L\'evy's inequality (see for instance Theorem 2.13.1 in \cite{Sto}), we get
\begin{equation}\label{morceau0}
\PP\big[\max_{k=0,\ldots,a_T}Z_k\leq 1\big| S\big]
=1-\PP\big[\max_{k=0,\ldots,a_T}Z_k > 1\big| S \big]
\geq 1-2\PP\big[Z_{a_T}> 1\big| S \big] = \PP\big[\big|Z_{a_T}\big|\leq 1\big| S \big].
  \end{equation}
Now, let $\gamma\in(0,1/48)$, due to Lemma \ref{TLLcond3}, for $T$ large enough such that $a_T\in d{}\mathbb N$,
\begin{eqnarray}
\PP\big[\big|Z_{a_T}\big|\leq 1 |S \big]&\ge& 
\PP\big[Z_{a_T}= 0 |S \big]\nonumber\\
&\ge&\frac{c}{\sqrt{V_{a_T}}},\label{morceau01}
\end{eqnarray} 
holds a.s. on a sequence of $S$-measurable sets $\Omega_{a_T}^{(0)}(\gamma)$.
Due to \eqref{firststep}, \eqref{morceaubis}, \eqref{morceau01}, on $\Omega_{a_T}^{(0)}(\gamma)$, we have
\begin{eqnarray*}
\PP\left[\left . \max_{k=a_T,\ldots,T}(Z_k-Z_{a_T})\leq 1\right|S  \right]
&\le&C\frac{(a_T+Te^ {-\beta_T})\EE\left[\left(\sum_{k=0}^T e^{Z_k} \right)^{-1}\Big| S \right]}
{ \PP\left[Z_{a_T} \leq -\beta_T| S\right]\,
\PP\big[\max_{k=0,\ldots,a_T}Z_k\leq 1\big| S \big]}\\
&\le&C\frac{a_T^{\frac 34+\frac {\gamma}2}(a_T+Te^ {-\sigma_{\xi}a_T^{\frac 34-\frac \gamma 2}})\EE\left[\left(\sum_{k=0}^T e^{Z_k} \right)^{-1}\Big| S \right]}
{(\Phi(-1)-\tilde C  a_T^{-\frac 14+\frac {3\gamma} 2})}\\
&\le& \tilde C_\gamma {a_T^{\frac 34+\frac {\gamma}2}(a_T+Te^ {-\sigma_{\xi}a_T^{\frac 34-\frac \gamma{2}}})\EE\left[\left(\sum_{k=0}^T e^{Z_k} \right)^{-1}\Big| S \right]}
\end{eqnarray*}
for $T$ large enough, 
where we used the facts that $\beta_T=\sigma_{\xi}\sqrt{V_{a_T}}\ge\sigma_{\xi} a_T^{\frac 34-\frac \gamma{2}}$ and that $\frac{ N_{a_T}^*}{ \sqrt{V_{a_T}}}\le a_T^{-\frac 14+\frac{3\gamma} 2}$.
It comes
\begin{equation}\label{limsupfinal}
\PP\left[\max_{k=a_T,\ldots,T}(Z_k-Z_{a_T})\leq 1 \right]\le \mathbb P((\Omega_{a_T}^{(0)}(\gamma))^{c})+
 \tilde C_\gamma {a_T^{\frac 34+\frac {\gamma}2}}(a_T+Te^ {-\sigma_{\xi}a_T^{\frac 34-\frac \gamma{2}}})
\EE\left[\left(\sum_{k=0}^T e^{Z_k} \right)^{-1} \right]
\end{equation}
if $a_T\in d{}\mathbb N$.
Let $\delta_0\in(0,\frac\gamma 2)$, we take $a_T=d{}\left\lceil\frac 1{d{}}((\log T)/4)^{\frac 2{\delta_0}}\right\rceil$, so that $\mathbb P((\Omega_{a_T}^{(0)}(\gamma))^{c})=o\left( e^{-(a_T)^{\frac{\delta_0} 2}}\right)\le T^{-\frac 14}$.
We observe that $a_T^{\frac 34-\frac\gamma 2}\ge c_0 (\log T)^{(\frac 3{\gamma}-2)}>c_0(\log T)^{142}$.
Applying Lemma~\ref{Mol}, we conclude that there exists some constant $c>0$ such that
\begin{equation}\label{Pmax3}
\PP\big[\max_{k=a_T,\ldots,T} (Z_k-Z_{a_T})\leq 1\big] =O\left( (\log T)^{c}\ T^{-\frac{1}{4}}\right).
\end{equation}
The left hand side of \eqref{Pmax3} is greater than the quantity we want to bound from above, since by stationarity of increments,
\begin{equation} \label{eqn:obis}
\PP\big[\max_{k=a_T,\ldots,T}(Z_k-Z_{a_T})\leq 1\big]
=\PP\big[\max_{k=0,\ldots,T-a_T}Z_ {k}\leq 1\big]
\geq \PP\big[\max_{k=0,\ldots,T}Z_ {k}\leq 1\big].
\end{equation}

Let us make now the assumptions of Theorem \ref{theoMk0}.
Analogously, on $\tilde \Omega_{a_T}^{(0)}(\gamma)$, if $\sum_{k=0}^{a_T}\varepsilon_k\in d{}\mathbb N$, the following inequality holds
\begin{equation}
\PP\left[\left . \max_{k=a_T,\ldots,T}(\tilde Z_k-\tilde Z_{a_T})\leq 1\right|
(S,(\varepsilon_k)_k) \right]
\le \tilde C_\gamma {a_T^{\frac 34+\frac {\gamma}2}(a_T+Te^ {-\sigma_{\xi}a_T^{\frac 34-\frac \gamma{2}}})\EE\left[\left(\sum_{k=0}^T e^{\tilde Z_k} \right)^{-1}\Big| (S,(\varepsilon_k)_k) \right]}.
\end{equation}
We take $0<2\delta_0<\tilde\gamma<\gamma<\frac 1{48}$
and $\tilde a_T:=\left\lceil ((\log T)/4)^{\frac 2{\delta_0}}\right\rceil$.
We define $ a_T:=\min\{k\ge \tilde a_T:\sum_{\ell=1}^k\varepsilon_\ell\in d{}\mathbb N\}$ (here $a_T$ is a $(S,(\varepsilon_k)_k)$-measurable random variable).
Since $d{}\le 2$, we observe that
$$\mathbb P(a_T-\tilde a_T>\tilde a_T^{\delta_0})\le\mathbb P\left(\sum_{\ell=1}^{\tilde a_T^{\delta_0}}\varepsilon_\ell=0\right) 
   =(1-\delta)^{\tilde a_T^{\delta_0}}=o\left(e^{-(\tilde a_T)^{\frac{\delta_0}2}}\right)
=o(T^{-\frac 14}).$$
Moreover, there exists $\tilde T_0$ such that, for every $T\ge \tilde T_0$, $\tilde\Omega_{\tilde a_T}^{(0)}(\tilde \gamma)\cap\{a_T\le \tilde a_T+\tilde a_T^{\delta_0}\}\subseteq\tilde\Omega_{ a_T}^{(0)}(\gamma)$.
Hence
\begin{multline}\label{limsupfinal0}
\PP\left[\max_{k=a_T,\ldots,T}(\tilde Z_k-\tilde Z_{a_T})\leq 1 \right]\le \mathbb P((\tilde\Omega_{\tilde a_T}^{(0)}(\tilde\gamma))^{c})+\mathbb P(a_T-\tilde a_T>\tilde a_T^{\delta_0})\\
+ \tilde C_\gamma {a_T^{\frac 34+\frac {\gamma}2}}(a_T+Te^ {- \sigma_{\xi} a_T^{\frac 34-\frac \gamma{2}}})
\EE\left[\left(\sum_{k=0}^T e^{\tilde Z_k} \right)^{-1} \right]
\end{multline}
and we conclude as above.
\subsection{Proof of the lower bound}
Fix $\beta>1/4$ and define $Z_T^*:=\max_{k=1,\ldots,T} Z_k$. Observe that
\begin{eqnarray} \label{eqn:argumentlowerboundalpha}
 \EE \Big[ \Big(\sum_{k=1}^T e^{Z_k} \Big)^{-1} \Big] &= & \EE \Big[ \Big(\sum_{k=1}^T e^{Z_k} \Big)^{-1}\ind_{ Z_T^* \geq \beta \log T} \Big] + 
 \EE \Big[ \Big(\sum_{k=1}^T e^{Z_k} \Big)^{-1}\ind_{ Z_T^* < \beta \log T} \Big] \\
&= : & I_1(T) + I_2(T).
\end{eqnarray}
First, we clearly have 
\begin{eqnarray}
I_1(T) &\leq &\EE [ e^{-Z_T^*} \ind_{ Z_T^* \geq \beta \log T}]  \notag
\\
&\leq & T^{-\beta}. \notag
\end{eqnarray}
We observe that
$$I_2(T)\le \mathbb E\left[e^{-Z_1}\ind_{ Z_T^* < \beta \log T}\right].$$
Let us fix a parameter $\theta\in(0,1)$ and let us define the event $A:=\{ Z_1 \geq  -\log H^{-1}\left(\frac {3\kappa}4\theta T^{-\frac 14}\right) \}$. Then, 
\begin{equation}\label{eqn:summation}
I_2(T) \leq H^{-1}\left(\frac {3\kappa}4\theta T^{-\frac 14}\right) \PP\left[ Z_T^* < \beta \log T \right] + \EE \left[ e^{-Z_1} \ind_{A^{c}} \right]. 
\end{equation}
Since $Z_1$ has the same distribution as $\xi_1$, its distribution is symmetric and so
$$
\EE \left[ e^{-Z_1} \ind_{A^{c}} \right]=H\left(H^{-1}\left(\frac {3\kappa}4\theta T^{-\frac 14}\right)\right)\le\frac {3\kappa}4\theta T^{-\frac 14}.
$$
But 
$$\frac{3\kappa}4\theta<\liminf_{x\rightarrow+\infty}
x^{\frac 14}(I_1(x)+I_2(x))$$
So we have shown that for $T$ large,
\begin{equation}\label{ZN}
\PP\left[Z_T^* \leq \beta \log T\right] \geq c^{-1} T^{-1/4} \left[H^{-1}\left(\frac {3\kappa}4\theta T^{-\frac 14}\right)\right]^{-1}.
\end{equation}
Let $\gamma\in(0,1/48)$, $\delta_0\in(0,\frac\gamma 2)$ and $a_T= d{}\lceil (\beta \log T)^{2/\delta_0}/d{}\rceil$. Note that from inequalities (\ref{slepian1}) and (\ref{slepian2}), we have
\begin{eqnarray} 
\PP\left[\left. \max_{k=1,\ldots,T} Z_k\leq 1 \right|S\right] &\geq& \PP\left[ \max_{k=1,\ldots,a_T} Z_k\leq 1; Z_{a_T}\leq - \beta \log T;\notag \right.\\ && \qquad\left.\left. \max_{k=a_T+1,\ldots, T} Z_k - Z_{a_T} \leq \beta \log T\right|S\right] \notag \\
&\geq & \PP\left[ \left. \max_{k=1,\ldots,a_T} Z_k\leq 1\right|S\right] \cdot \PP\left[Z_{a_T}\leq - \beta \log T|S\right] \notag \\ && \qquad \cdot \PP\left[ \left.\max_{k=a_T+1,\ldots, T} Z_k - Z_{a_T} \leq \beta \log T\right|S\right]\label{eqn:logtermslepian}
\end{eqnarray}
From Lemma \ref{TLLcond3}, (\ref{morceau0}), (\ref{morceau01}) and Lemma \ref{berry}, for $T$ large enough, on ${\Omega}_{a_T}^{(0)}(\gamma)$, 
\begin{eqnarray}
&\ &\ \ \PP\left[ \left.\max_{k=a_T+1,\ldots, T} Z_k - Z_{a_T} \leq \beta \log T\right|S\right] \\
&\leq&  c \sqrt{V_{a_T}} \left(\Phi\Big( -\frac{\beta \log T}{\sigma_{\xi}\sqrt{V_{a_T}}}\Big) - \tilde{C}  \frac{ N_{a_T}^*}{ \sqrt{V_{a_T}}}\right)^{-1} \PP\left[\left. \max_{k=1,\ldots,T} Z_k\leq 1 \right|S\right]\notag \\
&\leq & c a_T^{\frac{3+2\gamma}{4}}\left(\Phi\Big( -\frac{\beta\log T}{\sigma_{\xi}a_T^{\frac{3-2\gamma}{4}}}\Big)- \tilde{C} a_T^{-\frac{1-6\gamma}{4}}\right)^{-1}
\PP\left[\left. \max_{k=1,\ldots,T} Z_k\leq 1 \right|S\right]\notag \\ 
&\leq & c a_T^{\frac{3+2\gamma}{4}}\left(\Phi(-1)- \tilde{C} a_T^{-\frac{1-6\gamma}{4}}\right)^{-1}
\PP\left[\left. \max_{k=1,\ldots,T} Z_k\leq 1 \right|S\right] \notag \\
&\leq & c (\log T)^{\frac{3+2\gamma}{2\delta_0} } \PP\left[\left. \max_{k=1,\ldots,T} Z_k\leq 1 \right|S\right]\label{eqn:gaussiandirect}
\end{eqnarray}
Thus, using the last inequality and the stationarity of the increments, we get 
\begin{eqnarray} \label{ZN2}
\PP\left[    Z_T^* \leq \beta \log T\right]
&\leq & \EE \left[\PP\left[\left. \max_{k=a_T+1,\ldots, T} Z_k - Z_{a_T} \leq \beta \log T\right|S\right]\right] \nonumber\\
&\leq &  \EE\left[\PP\left[\left. \max_{k=a_T+1,\ldots, T} Z_k - Z_{a_T} \leq \beta \log T\right|S\right] {\bf 1}_{{\Omega}_{a_T}^{(0)}(\gamma)} \right] + \PP[({\Omega}_{a_T}^{(0)}(\gamma))^c] \nonumber\\
&\leq & c (\log T)^{\frac{3+2\gamma}{2\delta_0} } \PP\left[ \max_{k=1,\ldots,T} Z_k\leq 1 \right] + \PP[({\Omega}^{(0)}_{a_T}(\gamma))^c].
\end{eqnarray}
Since $\mathbb P((\Omega_{a_T}^{(0)}(\gamma))^{c})=\mathcal O\left( e^{-(a_T)^{\frac{\delta_0}2}}\right)=\mathcal O(T^{-\beta})$, by combining (\ref{ZN}) and (\ref{ZN2}), we get the lower bound.

Under the assumptions of Theorem \ref{theoMk0}, we proceed analogously by replacing $Z$ by $\tilde Z$ (and $V$ by $\tilde V$) and we obtain, for $T$ large enough,
\begin{equation}\label{ZN0}
\PP\left[\tilde Z_{T-a_T}^* \leq \beta \log T\right]\geq
\PP\left[\tilde Z_T^* \leq \beta \log T\right] \geq c^{-1} T^{-1/4} \left[\tilde H^{-1}\left(\frac {3\kappa}4\theta T^{-\frac 14}\right)\right]^{-1}.
\end{equation}
where $\tilde H$ is given by $\tilde H(t):=\mathbb E[e^{\xi_1\varepsilon_1}{\mathbf 1}_{\{e^{\xi_1\varepsilon_1}>t\}}]=\delta H(t)+(1-\delta)\ind_{\{t<1\}}$ (hence $\tilde H^{-1}(u)=H^{-1}(u/\delta)$ as soon as $u<\delta H(1)$)
and
\begin{equation}
\PP\left[ \left.\max_{k=a_T+1,\ldots, T} \tilde Z_k -\tilde Z_{a_T} \leq \beta \log T\right|(S,(\varepsilon_k)_k)\right] \le c (\log T)^{\frac{3+2\gamma}{2\delta_0} } \PP\left[\left.  \tilde Z_T^*\leq 1 \right|(S,(\varepsilon_k)_k)\right]\label{eqn:gaussiandirect0}
\end{equation}
on $\tilde \Omega^{(0)}_{a_T}$ provided $\sum_{k=1}^{a_T}\varepsilon_k\in d{}\mathbb N$.
We proceed now as for the upper bound.
We take $
\tilde a_T:=\left\lceil ((\log T)/4)^{\frac 2{\delta_0}}\right\rceil$
and $0<2\delta_0<\tilde\gamma<\gamma<\frac 1{48}$
and $\tilde a_T= \lceil (\beta \log T)^{2/\delta_0}\rceil$.
We define again $a_T:=\min\{k\ge \tilde{a}_T:\sum_{\ell=1}^k\varepsilon_\ell\in d{}\mathbb N\}$.
Using the stationarity of $(\tilde Z_k)_k$, we obtain
\begin{eqnarray*}
c^{-1} T^{-1/4}  \left[H^{-1}\left(\frac {3\kappa}{4\delta}\theta T^{-\frac 14}\right)\right]^{-1}&\le&
\PP\left[\tilde Z_{T-a_T}^*\leq 1 \right]\\
&\le&
\mathbb E\left[\PP\left[ \left.\max_{k=a_T+1,\ldots, T} \tilde Z_k - \tilde Z_{a_T} \leq \beta \log T\right|(S,(\varepsilon_k)_k)\right]\right]\\
&\le&   \mathbb P((\tilde \Omega_{\tilde a_T}^{(0)}(\tilde\gamma))^{c})+\mathbb P(a_T-\tilde a_T>\tilde a_T^{\delta_0})\\
&\ &+ c (\log T)^{\frac{3+2\gamma}{2\delta_0} } \PP\left[ \tilde Z_T^*\leq 1 \right]
\end{eqnarray*}
for $T$ large enough, from which we conclude.

\end{document}